\newtheorem{theorem}{Theorem}[section]
\newtheorem{lemma}[theorem]{Lemma}
\newtheorem{definition}[theorem]{Definition}
\newtheorem{prop}[theorem]{Proposition}
\newtheorem{cor}[theorem]{Corollary}
\newtheorem{question}[theorem]{Question}
\title{Distance Critical Graphs}
\author{Joshua Cooper and Gabrielle Tauscheck}
\date{\today}
\begin{document}

\maketitle

\begin{abstract}
In 1971, Graham and Pollak provided a formula for the determinant of the distance matrix of any tree on $n$ vertices. Yan and Yeh reproved this by exploiting the fact that pendant vertices can be deleted from trees without changing the remaining entries of the distance matrix. Considering failures of their argument to generalize invites the question: which graphs have the property that deleting any one vertex results in a change to some pairwise distance? We refer to such worst-case graphs as ``distance critical''. This work explores the structural properties of distance critical graphs, preservation of distance-criticality by products, and the nature of extremal distance critical graphs. We end with a few open questions. 
\end{abstract}

\section{Introduction}

Graham and Pollak (\cite{GrPo71}) famously showed that the determinant of the distance matrix of a tree $T$ on $n$ vertices -- the $n \times n$ matrix whose each $(v,w) \in V(T)\times V(T)$ entry is the ordinary graph distance between $v$ and $w$ -- depends only on $n$.  In fact, they gave a formula which, strikingly, does not depend on the tree except via $n$: $-(n-1)(-2)^{n-2}$.  These results spawned several generalizations and new directions in subsequent years; see \cite{AouHan2014} for an extensive survey on the topic.

Some of this research is concerned with finding new proofs of the Graham-Pollak Theorem.  One such example that largely inspired the present work is \cite{YanYeh2006}, providing an elegant reproof that relies on the fact that deleting pendant vertices from a tree causes all remaining pairwise distances to remain unchanged.  In order to consider the limits of their methods, it is natural to ask which graphs have the property that no vertex can be deleted without altering the distance metric on the remaining vertices.  The question can also be viewed as determining whether there exists a maximal proper induced subgraph $H$ of a graph $G$ such that $H$ embeds isometrically into $G$.  This paper introduces ``distance critical'' graphs, which are characterized by a lack of such subgraphs, and studies some of their properties.  In Section \ref{sec:preliminaries}, we give the formal definition and give a few preliminary results.  Section \ref{sec:structural} contains more in-depth analysis of the structure of distance critical graphs, and Section \ref{sec:products} investigates how distance criticality interacts with standard graph products.  Then, Section \ref{sec:extremal} delves into properties of extremal distance critical graphs.  Numerous questions remain unanswered, of which we list a few interesting ones in the final Section \ref{sec:questions}.

\section{Preliminaries} \label{sec:preliminaries}

We begin with several useful definitions.

\begin{definition}
    Given a graph $G = (V(G),E(G))$, the distance $d_G(x,y)$ between two vertices $x,y \in V(G)$ is the length of the shortest path whose endvertices are $x$ and $y$, or $\infty$ if there is no such path.  A shortest path between two vertices is called a geodesic path.
\end{definition}

\begin{definition} A graph $G$ is distance critical if, for each $v \in V(G)$, there exist $x,y \in G-v$ so that $d_G(x,y) \neq d_{G-v}(x,y)$.
\end{definition} 

For an example of a distance critical graph, see Figure \ref{fig:dodecahedron}.  Below, we present a useful reformulation of distance criticality that draws on the notion of ``determining pairs''.

\begin{definition} A pair of vertices $a,b$ is a determining pair for $v$ if $a$ and $b$ are distinct and nonadjacent, and $v$ is their unique common neighbor.
\end{definition}  

\begin{prop} \label{prop:detpair}
A connected graph $G$ is distance critical if and only if for all $v \in V(G)$, $v$ admits a determining pair $\{a,b\}$ where $a,b \in V(G)$.
\end{prop}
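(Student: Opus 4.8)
The plan is to prove the two implications separately, both resting on the elementary observation that deleting a vertex never creates a new path, so $d_{G-v}(x,y) \ge d_G(x,y)$ for all $x,y \in V(G)\setminus\{v\}$ (connectedness only serves to keep $d_G$ finite). Consequently the condition $d_G(x,y) \ne d_{G-v}(x,y)$ in the definition is equivalent to the strict inequality $d_{G-v}(x,y) > d_G(x,y)$, which in turn holds precisely when \emph{every} geodesic from $x$ to $y$ in $G$ passes through $v$. This reformulation is the hinge of the whole argument, and I would state it first.

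For the reverse direction ($\Leftarrow$), suppose every vertex admits a determining pair. Fix $v$ and let $\{a,b\}$ be one of its determining pairs. Since $a$ and $b$ are nonadjacent but share the common neighbor $v$, we have $d_G(a,b)=2$. In $G-v$ the pair $a,b$ remains nonadjacent and, because $v$ was their \emph{unique} common neighbor, they now have no common neighbor at all, forcing $d_{G-v}(a,b)\ge 3$ (possibly $\infty$). Thus deleting $v$ alters the distance between $a$ and $b$, and as $v$ was arbitrary, $G$ is distance critical. This direction is routine.

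For the forward direction ($\Rightarrow$), I would fix $v$ and invoke distance criticality to obtain $x,y\ne v$ with $d_{G-v}(x,y) > d_G(x,y)$; then fix a geodesic $P$ from $x$ to $y$, which must contain $v$ as an interior vertex since its endpoints are not $v$. Let $a$ and $b$ be the neighbors of $v$ immediately preceding and following it on $P$. Then $a\ne b$ because $P$ is simple, and $a\not\sim b$ because otherwise the shortcut replacing $a\,v\,b$ by $a\,b$ would shorten $P$, contradicting that it is a geodesic. As $v$ is a common neighbor of $a$ and $b$ by construction, it remains only to show that $v$ is their \emph{unique} common neighbor.

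That last point is the main obstacle, and it needs a short case analysis on an alleged second common neighbor $w\ne v$ of $a$ and $b$. If $w\notin V(P)$, rerouting $P$ through $w$ in place of $v$ gives a geodesic of the same length avoiding $v$, contradicting $d_{G-v}(x,y)>d_G(x,y)$. If instead $w$ lies on $P$, then (as $w$ cannot be $a$ or $b$) it sits at least two steps from $v$ along $P$, and splicing in the edge from $w$ to whichever of $a,b$ lies on the far side yields an $x$--$y$ walk strictly shorter than $P$, again contradicting that $P$ is a geodesic. Hence no such $w$ exists, $\{a,b\}$ is a determining pair for $v$, and the proof is complete. The delicate part is precisely this uniqueness step: one must check that the rerouted walk is genuinely $v$-avoiding, or the spliced walk genuinely shorter, in \emph{every} possible position of $w$ relative to $P$, which is why the case split cannot be avoided.
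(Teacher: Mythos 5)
Your proof is correct and follows essentially the same route as the paper's: the reverse direction is identical, and the forward direction likewise extracts the two neighbors $a,b$ of $v$ on a geodesic and shows they form a determining pair for $v$. If anything, yours is the more careful version: the paper asserts that $v$ lies on the \emph{unique} geodesic between some pair and contradicts uniqueness, whereas you work from the weaker, directly justified fact that every $x$--$y$ geodesic passes through $v$, and your case split on whether the alleged second common neighbor $w$ lies on $P$ handles a point the paper glosses over.
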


\begin{proof}
Let $v \in V(G)$ be arbitrary in a distance critical graph $G$.  Note that $v$ must be an internal vertex of some geodesic path $P$ which is the unique geodesic path connecting its endvertices $x$ and $y$, or else its deletion would not alter any vertex pair's distance.  Suppose $a$ and $b$ are the neighbors of $v$ in $P$.  Notice that $a$ is not adjacent to $b$; otherwise, $P$ could be shortened by deleting $v$.  Assume there exists another vertex, $w \neq v$, such that $a$ is adjacent to $w$ and $b$ is adjacent to $w$. Then the path $xPawbPy$ has the same length as $P$, contradicting the fact that it was the unique geodesic between $x$ and $y$. Therefore, $v$ must be the unique common neighbor of some two nonadjacent vertices, as desired.

For the reverse direction, suppose $G$ has the property that every vertex $v$ admits a determining pair $\{a,b\}$.  Then $d_G(a,b) = 2$, but $d_{G-v}(a,b) > 2$, so $G$ is distance critical.
\end{proof}

		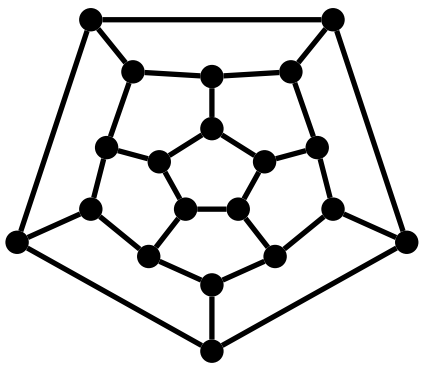
\begin{figure}[htbp]
		\centering
		\begin{tabular}{c}
			\begin{tikzpicture}
		[scale=0.7,xscale=1,yscale=0.9,rotate=180,inner sep=1mm, 
		vertex/.style={circle,thick,draw}, 
		thickedge/.style={line width=2pt}] 
			\node[vertex] (a1) at (2,3) [fill=black] {};
			\node[vertex] (a2) at (3,3) [fill=black] {};
			\node[vertex] (a3) at (3.5,2) [fill=black] {};
                \node[vertex] (a5) at (1.5,2) [fill=black] {};
                \node[vertex] (a4) at (2.5,1.3) [fill=black] {};
                \node[vertex] (p1) at (1.3,4) [fill=black] {};
			\node[vertex] (p2) at (3.7,4) [fill=black] {};
			\node[vertex] (p3) at (4.5,1.7) [fill=black] {};
                \node[vertex] (p4) at (2.5,.2) [fill=black] {};
                \node[vertex] (p5) at (0.5,1.7) [fill=black] {};
                \node[vertex] (q1) at (2.5,4.6) [fill=black] {};
			\node[vertex] (q2) at (4.8,3) [fill=black] {};
			\node[vertex] (q3) at (4,0.1) [fill=black] {};
                \node[vertex] (q4) at (1,.1) [fill=black] {};
                \node[vertex] (q5) at (0.2,3) [fill=black] {};
                \node[vertex] (o1) at (2.5,6) [fill=black] {};
			\node[vertex] (o2) at (6.2,3.7) [fill=black] {};
			\node[vertex] (o3) at (4.8,-1) [fill=black] {};
                \node[vertex] (o4) at (.2,-1) [fill=black] {};
                \node[vertex] (o5) at (-1.2,3.7) [fill=black] {};
			
			\draw[thickedge] (a1)--(a2)--(a3)--(a4)--(a5)--(a1);
                \draw[thickedge] (a1)--(p1);
                \draw[thickedge] (a2)--(p2);
                \draw[thickedge] (a3)--(p3);
                \draw[thickedge] (a4)--(p4);
                \draw[thickedge] (a5)--(p5);
                \draw[thickedge] (p1)--(q1)--(p2)--(q2)--(p3)--(q3)--(p4)--(q4)--(p5)--(q5)--(p1);
                \draw[thickedge] (q1)--(o1);
                \draw[thickedge] (q2)--(o2);
                \draw[thickedge] (q3)--(o3);
                \draw[thickedge] (q4)--(o4);
                \draw[thickedge] (q5)--(o5);
                \draw[thickedge] (o1)--(o2)--(o3)--(o4)--(o5)--(o1);
			
		\end{tikzpicture}
		\end{tabular}
	\caption{Dodecahedron}
	\label{fig:dodecahedron}
	\end{figure}

Proposition \ref{prop:detpair} immediately implies the following corollaries.

\begin{cor}\label{cor:degatleast2}
Distance critical graphs have minimum degree at least $2$.
\end{cor}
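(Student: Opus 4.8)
The plan is to read the conclusion directly off Proposition~\ref{prop:detpair}. Fix an arbitrary vertex $v \in V(G)$. Since $G$ is distance critical, the proposition supplies a determining pair $\{a,b\}$ for $v$; by the definition of a determining pair, $a$ and $b$ are distinct and $v$ is a common neighbor of both. In particular $v$ is adjacent to the two distinct vertices $a$ and $b$, so $\deg(v) \geq 2$. As $v$ was arbitrary, every vertex has degree at least $2$, which is exactly the claim.

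The only point requiring care is that Proposition~\ref{prop:detpair} is stated for \emph{connected} $G$, whereas the corollary is phrased for distance critical graphs in general. To cover the possibly-disconnected case without invoking the proposition, I would argue directly from the definition. Suppose $v$ has $\deg(v) \leq 1$. Then $v$ is never an internal vertex of a geodesic: a degree-$0$ vertex lies on no path at all, and a degree-$1$ vertex has a single neighbor, so it can only appear as the endpoint of a simple path, never in its interior. Hence no shortest path between two vertices of $G - v$ passes through $v$, and therefore $d_{G-v}(x,y) = d_G(x,y)$ for every $x,y \in V(G-v)$. This contradicts distance criticality, which requires that deleting $v$ alter some pairwise distance. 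Thus $\deg(v) \geq 2$ for every $v$.

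I expect no substantive obstacle: the statement is essentially a restatement of the determining-pair characterization, and the membership ``$v$ is a common neighbor of $a$ and $b$'' forces two incident edges at $v$. The only mild subtlety is the bookkeeping about connectivity, which the short direct argument above dispatches.
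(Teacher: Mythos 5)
Your argument is correct and matches the paper's: the corollary is read directly off Proposition~\ref{prop:detpair} (each vertex is a common neighbor of the two distinct vertices in its determining pair), which is exactly what the paper intends by calling it an immediate consequence. Your extra remark handling the disconnected case directly from the definition is a reasonable bit of added care but does not change the approach.
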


\begin{cor} \label{cor: disconnected}
A graph is distance critical if and only if all its connected components are distance critical. 
\end{cor}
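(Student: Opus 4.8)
The plan is to prove Corollary \ref{cor: disconnected} by leveraging Proposition \ref{prop:detpair}, which reduces distance-criticality of a connected graph to the existence of a determining pair for every vertex. The statement is a biconditional relating a graph $G$ to its connected components, so I would handle both directions separately, and the central observation I need is that determining pairs are an intrinsically \emph{local} notion: whether a pair $\{a,b\}$ is a determining pair for $v$ depends only on adjacencies and nonadjacencies among $a$, $b$, $v$, and their common neighbors, all of which must lie in the same connected component as $v$.

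First I would address the forward direction. Suppose $G$ is distance critical, and let $C$ be a connected component of $G$ with $v \in V(C)$. Since $G$ is distance critical, Proposition \ref{prop:detpair} supplies a determining pair $\{a,b\}$ for $v$ in $G$: the vertices $a,b$ are distinct, nonadjacent, and have $v$ as their unique common neighbor. I would argue that $a$ and $b$ must both lie in $C$, since each is adjacent to $v$ and hence in $v$'s component. Crucially, the condition ``$v$ is their \emph{unique} common neighbor'' is inherited when we pass to the subgraph $C$: any common neighbor of $a$ and $b$ in $C$ is also a common neighbor in $G$, so the uniqueness cannot be violated by restricting to $C$. Thus $\{a,b\}$ remains a determining pair for $v$ within $C$, and since $v$ was arbitrary, every vertex of $C$ admits a determining pair, so $C$ is distance critical by Proposition \ref{prop:detpair}.

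For the reverse direction, suppose every connected component of $G$ is distance critical, and let $v \in V(G)$ be arbitrary. Let $C$ be the component containing $v$. By Proposition \ref{prop:detpair} applied to $C$, there is a determining pair $\{a,b\}$ for $v$ within $C$. I then need to check that $\{a,b\}$ remains a determining pair for $v$ in the whole graph $G$: the vertices $a,b$ are still distinct and nonadjacent, and any common neighbor of $a$ and $b$ in $G$ must be adjacent to $a$, hence must lie in $C$, so it is already a common neighbor within $C$; by uniqueness in $C$, the only such vertex is $v$. Therefore $\{a,b\}$ is a determining pair for $v$ in $G$, and since $v$ was arbitrary, $G$ is distance critical by Proposition \ref{prop:detpair}.

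The only subtle point, and the step I would be most careful about, is the transfer of the \emph{uniqueness} clause across the two graphs; the existence and nonadjacency conditions transfer trivially in both directions, but I must make sure uniqueness neither gains nor loses spurious common neighbors. This is handled uniformly by the remark that every common neighbor of $a$ and $b$ is adjacent to a vertex of $C$ and hence lies in $C$, so the set of common neighbors of $a$ and $b$ is identical whether computed in $C$ or in $G$. I should also note the degenerate case of an isolated vertex: a component consisting of a single vertex has no determining pair for that vertex, so it is not distance critical, consistent with Corollary \ref{cor:degatleast2}, and such a component's presence would correctly force $G$ to fail to be distance critical under both sides of the equivalence.
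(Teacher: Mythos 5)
Your proof is correct and is precisely the argument the paper has in mind: the paper gives no explicit proof, asserting only that the corollary follows immediately from Proposition \ref{prop:detpair}, and your locality argument for determining pairs is that immediate deduction spelled out. The one point worth tightening is that Proposition \ref{prop:detpair} is stated only for \emph{connected} graphs, so your forward direction applies it to a possibly disconnected $G$; this is harmless because a pair $\{x,y\}$ whose distance changes upon deleting $v$ must lie in $v$'s component (infinite distances cannot change, and distances in other components are unaffected), so the proposition and its proof go through verbatim for disconnected graphs, but that observation should be made explicit rather than citing the proposition outside its stated hypothesis.
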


Table \ref{tab:DCgraphs} depicts the number of connected distance critical graphs up to 11 vertices obtained using SageMath (\cite{sagemath}). Clearly, there are no distance critical graphs on one or two vertices since there are not enough vertices to form a determining pair. Further, there are no distance critical graphs on three or four vertices given the restrictions imposed by Proposition \ref{prop:detpair}. The smallest distance critical graph is the cycle on five vertices.

\begin{table}[htbp]\centering
	\begin{tabular}{ |c||c|c|c|c|c|c|c|c|c|c|c| } 
			\hline
$n$ & 1 & 2 & 3 & 4 & 5 & 6 & 7 & 8 & 9 & 10 & 11 \\ 
\hline
$\textrm{A}349402(n)$ & 0 & 0 & 0& 0 & 1 & 1 & 4 & 15 & 168 & 2,252 & 94,504 \\ 
			\hline
		\end{tabular}
	\caption{The number $\textrm{A}349402(n)$ of connected distance critical graphs on $n$ vertices (\cite[A349402]{oeis}).}
	
		\label{tab:DCgraphs}
	\end{table}

\section{Structural Properties} \label{sec:structural}
This section presents results on the structural properties of distance critical graphs. The {\em girth} of a graph $G$ is the length of the shortest cycle in the graph. 

\begin{lemma}\label{lem:girth}
Let $g$ represent the girth of a graph. If graph $G$  has minimum degree at least $2$ and girth $g > 4$, then it is distance critical.  
\end{lemma}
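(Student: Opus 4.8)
The plan is to invoke Proposition~\ref{prop:detpair} and reduce the claim to a purely local statement: it suffices to exhibit a determining pair for \emph{every} vertex $v \in V(G)$. Proposition~\ref{prop:detpair} is phrased for connected graphs, but the ``if'' direction it supplies is local (it only produces two vertices at distance $2$ whose distance blows up after deleting $v$), and in any case Corollary~\ref{cor: disconnected} lets us argue one component at a time; since minimum degree at least $2$ forbids leaves, no component is a finite tree, so every component inherits girth at least $g > 4$. Throughout I will use that $g > 4$ means $g \geq 5$, so $G$ contains neither a $3$-cycle nor a $4$-cycle.

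Fix an arbitrary $v \in V(G)$. Because the minimum degree is at least $2$, the vertex $v$ has at least two distinct neighbors; choose any two of them and call them $a$ and $b$. I claim $\{a,b\}$ is a determining pair for $v$, which amounts to verifying that $a \neq b$, that $a$ and $b$ are nonadjacent, and that $v$ is their \emph{unique} common neighbor. The first holds by the choice of $a,b$.

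For nonadjacency, suppose toward a contradiction that $a$ and $b$ are adjacent. Then $v, a, b$ would span a triangle, i.e.\ a $3$-cycle, contradicting $g \geq 5$; hence $a$ and $b$ are nonadjacent. For uniqueness, suppose some $w \neq v$ were adjacent to both $a$ and $b$. The four vertices $v, a, w, b$ are pairwise distinct: $a \neq b$ by choice; $a, b \neq v$ as they are neighbors of $v$; and $w \neq a$, $w \neq b$ since $w$ is adjacent to each of them and $G$ has no loops. Thus $v\,a\,w\,b\,v$ is a $4$-cycle, again contradicting $g \geq 5$. Therefore $v$ is the unique common neighbor of the nonadjacent pair $a,b$, so $\{a,b\}$ is a determining pair for $v$.

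Since $v$ was arbitrary, every vertex of $G$ admits a determining pair, and Proposition~\ref{prop:detpair} yields that $G$ is distance critical. I do not anticipate a substantive obstacle: the argument is essentially a two-line translation of the girth hypothesis into the exclusion of $3$- and $4$-cycles. The one point deserving care is the verification that $v\,a\,w\,b\,v$ is genuinely a $4$-cycle rather than a shorter closed walk, since that is precisely where the $g \geq 5$ hypothesis is consumed; this is why I record explicitly that all four vertices are distinct. One should also double-check the reduction to connected components (via the no-leaves observation) so that the lemma is correctly stated for graphs that need not be connected.
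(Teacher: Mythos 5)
Your proof is correct and is essentially the paper's argument, just run directly rather than contrapositively: the paper assumes $G$ is not distance critical and derives a triangle or a $4$-cycle from a vertex with no determining pair, while you assume $g\geq 5$ and produce the determining pair outright from the same two neighbors. The extra care you take about distinctness of $v,a,w,b$ and about disconnected graphs is sound but not a substantive departure.
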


\begin{proof}
Let $G$ be a graph of minimum degree at least $2$ which is not distance critical. Since $G$ is not distance critical, Proposition \ref{prop:detpair} implies that there exists a vertex $v$ which admits no determining pair. However, $v$ must have at least $2$ neighbors, call them $a$ and $b$. If $a$ and $b$ are adjacent, then the graph contains a triangle, so that $g = 3$. Therefore, assume $a$ and $b$ are not adjacent. Since $v$ does not have a determining pair, there must be another vertex $w \neq v$ adjacent to both $a$ and $b$. Then $G$ contains the $4$-cycle $vawbv$, so that $g=4$.   
\end{proof}

We say that a connected graph $G$ is $\kappa$-connected if $\vert V(G)\vert > \kappa$ and removing any $\kappa - 1$ vertices from $G$ does not disconnect it.

\begin{lemma} \label{lem:cycle}
In a $2$-connected distance critical graph, every vertex is contained in a cycle of length at least $5$. 
\end{lemma}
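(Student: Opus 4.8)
The plan is to use the determining-pair reformulation of distance criticality (Proposition \ref{prop:detpair}) together with $2$-connectivity to build, through each vertex, a cycle whose length is forced to be at least $5$. The guiding idea is that a determining pair $\{a,b\}$ for $v$ gives a length-$2$ path $a\,v\,b$ that cannot be ``short-circuited'' away from $v$: $2$-connectivity supplies an alternate $a$--$b$ route avoiding $v$, and the uniqueness of $v$ as a common neighbor forces that route to be long.

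First I would fix an arbitrary $v \in V(G)$ and invoke Proposition \ref{prop:detpair} to obtain a determining pair $\{a,b\}$ for $v$, so that $a$ and $b$ are distinct, nonadjacent, and have $v$ as their unique common neighbor. Since $G$ is $2$-connected, $G - v$ remains connected, so I can choose a path $P$ from $a$ to $b$ inside $G - v$; in particular $v \notin V(P)$ and the vertices of $P$ are distinct. Adjoining the edges $va$ and $vb$ to $P$ produces a cycle $C := a\,P\,b\,v\,a$ passing through $v$, of length $|E(P)| + 2$.

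The substantive step is bounding $|E(P)|$ below, and this is exactly where the full strength of the determining-pair hypothesis enters. The path $P$ cannot have length $1$, since $a \not\sim b$; and it cannot have length $2$, for a length-$2$ path would meet a vertex $w \neq v$ adjacent to both $a$ and $b$, contradicting that $v$ is the \emph{unique} common neighbor of the pair. Thus $|E(P)| \geq 3$, giving $C$ length at least $5$. I do not expect a serious obstacle here; the only points needing care are confirming that $C$ is genuinely a cycle (which follows since $v \notin V(P)$ and $P$ is a path) and noting that it is the uniqueness clause---not mere nonadjacency---that rules out the length-$2$ detour and so delivers the bound of $5$ rather than merely $4$.
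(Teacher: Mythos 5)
Your proof is correct, and it is a cleaner route than the one the paper takes. Both arguments start the same way---extract a determining pair $\{a,b\}$ for $v$ via Proposition \ref{prop:detpair} and use $2$-connectivity to find a path in $G-v$---but they diverge in which path they ask for. The paper first manufactures a neighbor $x$ of $a$ and a neighbor $y$ of $b$ outside $\{a,b,v\}$ (using Corollary \ref{cor:degatleast2}), connects $x$ to $y$ in $G-v$, and then needs a three-way case analysis according to whether $a$ and/or $b$ happen to lie on that connecting path. You instead connect $a$ to $b$ directly in $G-v$, and the length bound falls out in one step: the $a$--$b$ path $P$ has length at least $2$ since $a\not\sim b$, and exactly $2$ is impossible because its middle vertex would be a second common neighbor of $a$ and $b$, so $|E(P)|\geq 3$ and the cycle $a\,P\,b\,v\,a$ has length at least $5$. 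This eliminates the auxiliary vertices $x,y$ and all of the casework while using exactly the same two hypotheses (the uniqueness clause of the determining pair and connectivity of $G-v$); you also correctly flag that it is uniqueness, not mere nonadjacency, that pushes the bound from $4$ to $5$. No gaps.
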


\begin{proof}
Let $v \in V(G)$. Proposition \ref{prop:detpair} guarantees that $v$ has some determining pair $\{a,b\}$. Corollary \ref{cor:degatleast2} ensures that $a$ has degree at least $2$. Since $a$ and $b$ are nonadjacent, there must exist some other vertex, $x \in V(G)\setminus \{a,b,v\}$, that is adjacent to $a$. This vertex $x$ is not adjacent to $b$; otherwise, $v$ would not be the unique common neighbor of $a$ and $b$. We can use the same argument to show that there exists a vertex, $y \in V(G)\setminus\{a,b,v,x\}$, that is adjacent to $b$ but not adjacent to $a$.  Now, $G$ is $2$-connected; therefore, there is still a path, $P$, between $x$ and $y$ in $G-v$. The proof is completed via several cases. 
\begin{itemize}
    \item If $a$ and $b$ do not lie on the path $P$, then $xavbyPx$ is already a cycle in $G$ with length at least $5$. 
    \item Assume without loss of generality that $b$ lies on $P$ but $a$ does not. Since $x$ and $b$ are non-adjacent in $G$ and therefore also in $G-v$, some other vertex, call it $c$, lies between $x$ and $b$ on $P$. The graph $G$ then contains the cycle $xavbPx$ of length at least $5$. 
    \item Assume both $a$ and $b$ lie on $P$. Since $a$ and $b$ are nonadjacent, some other vertex, call it $e$, lies between $a$ and $b$ on $P$. If $e$ is the only other vertex of $aPb$, then $v$ is not the unique common neighbor of $a$ and $b$. Therefore, the path $aPb$ has length at least $3$ and $G$ contains the cycle $avbPa$ of length at least $5$. 
\end{itemize}
\end{proof}

A {\em dominating vertex} is a vertex that is adjacent to all other vertices. 

\begin{lemma} \label{nodominatingvtx} If $G$ is distance critical, then $G$ has no dominating vertex.
\end{lemma}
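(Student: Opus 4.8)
The plan is to prove the contrapositive via the determining-pair reformulation of Proposition \ref{prop:detpair}: I would assume $G$ has a dominating vertex $u$ and show that $G$ cannot then be distance critical, by exhibiting a vertex with no determining pair. The natural vertex to scrutinize is \emph{any} vertex $v \neq u$, since a dominating vertex is adjacent to everything and is therefore a ubiquitous candidate for ``common neighbor,'' which is exactly what a determining pair must avoid duplicating.

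Concretely, I would fix a dominating vertex $u$, pick an arbitrary $v \neq u$, and suppose toward a contradiction that $v$ admits a determining pair $\{a,b\}$ — that is, $a,b$ distinct and nonadjacent with $v$ their unique common neighbor. The argument then splits into two cases according to whether $u$ lies in the pair. If $u \notin \{a,b\}$, then $a$ and $b$ are both distinct from $u$ and hence both adjacent to $u$ by domination, so $u$ is a common neighbor of $a$ and $b$; since $u \neq v$, this contradicts $v$ being their \emph{unique} common neighbor. If instead $u \in \{a,b\}$, say $a = u$, then nonadjacency of $a$ and $b$ forces $u$ and $b$ to be nonadjacent, directly contradicting that $u$ dominates $b$. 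In either case $v$ has no determining pair.

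To finish, I note that Corollary \ref{cor:degatleast2} (or merely the presence of a second vertex, which domination presupposes) guarantees the existence of some $v \neq u$, and for this $v$ the case analysis above shows no determining pair exists. Proposition \ref{prop:detpair} then yields that $G$ is not distance critical, establishing the contrapositive. I do not anticipate a real obstacle here: the result is an essentially immediate consequence of the determining-pair characterization, and the only point requiring any care is the clean case split on whether the dominating vertex itself belongs to the purported determining pair.
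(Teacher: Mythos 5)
Your proposal is correct and follows essentially the same argument as the paper: take a non-dominating vertex, note that the dominating vertex cannot belong to its determining pair (else the pair would be adjacent), and then observe that the dominating vertex is a second common neighbor of the pair, a contradiction. The only difference is cosmetic — you phrase the two observations as an explicit case split, while the paper folds the first into a one-line remark.
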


\begin{proof}
Assume $G$ is a distance critical graph with $v$ a dominating vertex. Let $w$ be some other vertex of the graph. By Proposition \ref{prop:detpair}, $w$ has some determining pair $\{a,b\}$.  Note that $v \not \in \{a,b\}$, since otherwise $a$ and $b$ would be adjacent. Since $v$ is a dominating vertex, $a$ and $b$ are also adjacent to $v$, contradicting the fact that $w$ is their unique common neighbor. 
\end{proof}

\begin{lemma} \label{lem:diameter}
If $G$ is a distance critical graph with $x,y \in V(G)$ such that $d_G(x,y) >3$, then $G+xy$ is distance critical as well. 
\end{lemma}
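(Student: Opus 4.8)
The plan is to invoke Proposition~\ref{prop:detpair} and show that every determining pair already present in $G$ survives unchanged in $G+xy$. Fix an arbitrary vertex $v$; since $G$ is distance critical, $v$ has a determining pair $\{a,b\}$ in $G$, and I aim to prove that $\{a,b\}$ is still a determining pair for $v$ in $G+xy$. Adding an edge never removes adjacencies, so $v$ remains adjacent to both $a$ and $b$; thus the only conditions in jeopardy are that $a,b$ stay nonadjacent and that $v$ stays their unique common neighbor.

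Nonadjacency is easy to dispatch: the sole new edge is $xy$, so $a$ and $b$ can become adjacent only if $\{a,b\}=\{x,y\}$. But every determining pair satisfies $d_G(a,b)=2$, whereas $d_G(x,y)>3$, so $\{a,b\}\neq\{x,y\}$ and $a,b$ remain nonadjacent in $G+xy$.

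The heart of the argument is uniqueness. Suppose some $w\neq v$ were a common neighbor of $a$ and $b$ in $G+xy$. By uniqueness in $G$, such a $w$ was not a common neighbor before, so the edge $xy$ must have created the new adjacency; since only $x$ and $y$ changed neighborhoods, $w\in\{x,y\}$. Consider $w=x$ (the case $w=y$ is symmetric). Because $w=x\neq v$, $x$ was not adjacent to both $a$ and $b$ in $G$, and the only neighbor gained by $x$ is $y$; hence one of $a,b$---say $a$---must equal $y$, while $b$ is already adjacent to $x$ in $G$. Then $d_G(x,b)=1$ and $d_G(b,y)=d_G(b,a)=2$ (through $v$), so the triangle inequality forces $d_G(x,y)\le d_G(x,b)+d_G(b,y)=3$, contradicting $d_G(x,y)>3$.

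This contradiction shows no new common neighbor can appear, so $v$ is still the unique common neighbor of $a$ and $b$, and $\{a,b\}$ remains a determining pair for $v$ in $G+xy$. Since $v$ was arbitrary, Proposition~\ref{prop:detpair} yields that $G+xy$ is distance critical. I expect the bookkeeping in the uniqueness step to be the main obstacle: one must verify that the only candidates for a spurious common neighbor are $x$ and $y$, correctly track which adjacency the new edge supplies (noting that $w\neq v$ already rules out $x=v$), and check that every resulting configuration collapses $d_G(x,y)$ to at most $3$ via the triangle inequality.
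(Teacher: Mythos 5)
Your proof is correct and follows essentially the same approach as the paper's: both identify the two ways the new edge $xy$ could destroy a determining pair (making the pair adjacent, or creating a new common neighbor) and show each forces $d_G(x,y)\le 3$ via a short path through $v$. The only difference is framing --- you argue directly that every determining pair survives, while the paper argues by contradiction from a vertex that loses all its determining pairs --- but the case analysis and the contradictions obtained are identical.
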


\begin{proof}
Suppose $G$ is distance critical and $G+xy$ is not. Then there exists a $v \in V(G)$ such that $v$ has a determining pair, call it $\{w,z\}$, in $G$ but not in $G+xy$. Two cases must be considered: (1) $wz \in E(G+xy)$ or (2) there exists another vertex $u \in V(G) \setminus \{v,w,z\}$ such that $uw \in E(G+xy)$ and $uz \in E(G+xy)$.

In case (1), $wz \in E(G+xy)$.  Since $\{w,z\}$ is a determining pair for $v$ in $G$, $wz \not \in E(G)$, so without loss of generality we may assume $w = x$ and $z = y$. In $G$, $\{w,z\}$ is a determining pair for $v$; therefore, $wv \in E(G + xy)$ and $vz \in E(G+ xy)$. Since $w = x$ and $z = y$, this means that $xvy$ is a $P_3$ in $G$ which implies that $d_G(x,y) \leq 2$, a contradiction. 

In case (2), both $uw \in E(G+xy)$ and $uz \in E(G+xy)$ for some vertex $u \neq v$. Without loss of generality, assume $uw \notin E(G)$ but $uw \in E(G+xy)$. Therefore, $uw = xy$. Since $uw \notin E(G)$, $uz \in E(G)$. Without loss of generality, assume $u = x$ and $w = y$. Notice that $xzvy$ is a path of length $3$ in $G$, a contradiction.   
\end{proof}
  
\begin{lemma} \label{prop:deg3}
Let $G$ be a distance critical graph and $v \in V(G)$ such that $F=G-v$ is also distance critical. If $\deg(v) \leq 3$, then $v$ is involved in some determining pair.
\end{lemma}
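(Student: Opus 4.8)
The plan is to recast ``being involved in a determining pair'' in purely neighborhood-theoretic terms and then argue by contradiction. Write $N(u)$ for the neighborhood of a vertex $u$, and write $u \sim w$ to mean $uw \in E(G)$. Since $G$ is distance critical, every vertex is already the \emph{center} of a determining pair, so the content of the statement is that $v$ occurs as one of the two \emph{endpoints} of some determining pair. I would first observe that this happens exactly when there is a vertex $u \neq v$ that is nonadjacent to $v$ and satisfies $|N(u)\cap N(v)| = 1$: the single common neighbor is then the center of the pair $\{u,v\}$, and conversely. So I would assume, for contradiction, that $v$ lies in no determining pair, i.e. that every $u \neq v$ nonadjacent to $v$ has $|N(u)\cap N(v)| \neq 1$; since $\deg(v)\le 3$ this forces $|N(u)\cap N(v)|\in\{0,2,3\}$ for all such $u$. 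I would also record, using that $G$ is distance critical, that $v$ itself has a determining pair $\{a,b\}$, so that $a,b\in N(v)$, $a$ and $b$ are nonadjacent, and $N(a)\cap N(b)=\{v\}$.

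Next I would invoke the hypothesis that $F = G-v$ is distance critical to produce a determining pair $\{c,d\}$ of the neighbor $a$ \emph{inside} $F$; crucially $c,d\neq v$. The first key step is to verify that $\{c,d\}$ remains a determining pair for $a$ in $G$. Passing from $F$ to $G$ preserves nonadjacency of $c,d$ and adjacency of each to $a$, and can create a new common neighbor of $c$ and $d$ only if $v$ is adjacent to both, i.e. only if $c,d\in N(v)$. But $c$ and $d$ are neighbors of $a$ and hence distinct from $b$ (as $a\not\sim b$), so the only way either could lie in $N(v)$ is by equalling the possible third neighbor $e$ of $v$ — and two distinct vertices cannot both equal $e$. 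Hence $v$ is not a common neighbor of $c,d$, and $a$ remains their unique common neighbor in $G$.

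With $\{c,d\}$ in hand I would extract the contradiction. First I would show that both $c$ and $d$ are nonadjacent to $v$: if, say, $c=e$, then the no-determining-pair assumption applied to $d$ (which is nonadjacent to $v$) forces $|N(d)\cap N(v)|\ge 2$; since $a\in N(d)\cap N(v)$ and $d\sim b$ is impossible (it would place $d\in N(a)\cap N(b)=\{v\}$), the extra common neighbor must be $e=c$, contradicting $c\not\sim d$. Applying the assumption next to $c$ and to $d$ themselves, each has at least two common neighbors with $v$, one of which is $a$; the other cannot be $b$ (again because $N(a)\cap N(b)=\{v\}$), so it must be the third neighbor $e$. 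Thus $c\sim e$ and $d\sim e$, making $e$ a second common neighbor of $c$ and $d$ and contradicting the uniqueness of $a$. When $\deg(v)=2$ there is no third neighbor $e$ at all, so the very first application already yields $|N(c)\cap N(v)|=1$, an immediate contradiction.

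The step I expect to be the main obstacle is the $\deg(v)=3$ analysis, where the single ``spare'' neighbor $e$ must be tracked with care: both the lift of $\{c,d\}$ from $F$ to $G$ and the final collision rely on the bound $\deg(v)\le 3$ together with the rigidity coming from $N(a)\cap N(b)=\{v\}$, which repeatedly forbids $c$ and $d$ from attaching to $b$ and so funnels every forced adjacency onto $e$. The delicate point is to confirm that these forced adjacencies genuinely contradict either $c\not\sim d$ or the uniqueness of $a$ as common neighbor, rather than merely permuting which vertex plays the role of $e$; once $\deg(v)\le 3$ pins down that there is only one such spare vertex, the argument closes.
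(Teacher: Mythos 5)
Your proposal is correct; I checked each step and found no gaps. It is, however, a genuinely different argument from the one in the paper. The paper splits into the cases $\deg(v)=2$ and $\deg(v)=3$; in the harder degree-$3$ case it extracts one ``new'' vertex from a determining pair of $x$ and another from a determining pair of $y$ (where $\{x,y\}$ is $v$'s determining pair), forces both onto the third neighbor $z$, and then finishes with a further sub-case analysis on whether $xz$ or $yz$ is an edge, introducing yet another auxiliary vertex. Your argument is uniform in the degree and shorter: after reformulating ``$v$ is involved in a determining pair'' as the neighborhood condition $|N(u)\cap N(v)|=1$ for some nonadjacent $u$, you take \emph{both} members $c,d$ of a single determining pair of $a$ in $F$, verify it lifts to $G$ (the only threat being $v$ itself, excluded since $c,d\neq a,b$ and at most one of them can be the spare neighbor $e$), and then use the rigidity $N(a)\cap N(b)=\{v\}$ to funnel both $c$ and $d$ onto $e$, contradicting $N(c)\cap N(d)=\{a\}$; when $\deg(v)=2$ there is no $e$ and the contradiction is immediate. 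The key structural difference is that you exploit the \emph{pair} $\{c,d\}$ coming from one vertex $a$, whereas the paper uses one vertex from each of two different determining pairs and therefore needs extra work to collide them. What the paper's route buys in exchange is that, in the degree-$2$ case, it explicitly exhibits the determining pair $\{u,v\}$ rather than arguing by contradiction; your version could be made similarly constructive, since the failure of $|N(c)\cap N(v)|\geq 2$ identifies $\{c,v\}$ (or $\{d,v\}$) as a determining pair for $a$. Either way, your write-up could be tightened by stating explicitly at the outset that $c,d\notin\{a,b\}$ (both are neighbors of $a$ while $b$ is not), since that single observation is what powers both the lifting step and the exclusion of $b$ as a second common neighbor later on.
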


\begin{proof}
Corollary \ref{cor:degatleast2} ensures $\text{deg}(v) \geq 2$. Assume that $\text{deg}(v) = 2$, and label the neighbors of $v$ as $x$ and $y$. Since $G$ is distance critical, $\{x,y\}$ is the determining pair of $v$; therefore, $xy \notin E(G)$. Since $F$ is distance critical, $x$ has a determining pair that does not include $v$, call this determining pair $\{u,w\}$ where $u,w \in V(G)\setminus\{x,y,v\}$. We claimed that $\text{deg}(v) = 2$; therefore, $uv \notin E(G)$. This implies that $\{u, v\}$ is a determining pair for $x$ unless $uy \in E(G)$, which cannot occur, since otherwise $u$ would be a second common neighbor of $x$ and $y$. 

Assume $\text{deg}(v) = 3$, and label the neighbors of $v$ as $x, y$, and $z$. Since $G$ is distance critical, $v$ has a determining pair. Without loss of generality, let $\{x,y\}$ be the determining pair for $v$; therefore, $xy \notin E(G)$. Suppose that $v$ is not involved in any determining pair.  Then $x$ has a determining pair in $V(G) \setminus \{v\}$. Since $xy \notin E(G)$, at least one of these vertices, call it $u$, must be distinct from $\{x,y,z,v\}$.  Then $u$ is nonadjacent to $y$; otherwise, $\{x,y\}$ would not be a determining pair for $v$. The same argument implies that $y$ has a neighbor $w \notin \{x,y,z,v,u\}$ such that $xw \notin E(G)$. By assumption, $\text{deg}(v) = 3$; therefore, $uv \notin E(G)$ and $wv \notin E(G)$. Since $v$ is not involved in any determining pair, $u$ must be adjacent to a neighbor of $v$ other than $x$, or else $\{u,v\}$ would be a determining pair for $x$. The only possibility is if $uz \in E(G)$. The same argument implies that $wz \in E(G)$. 

From here, we prove that at least one of the pairs, $xz$ or $yz$ is not an edge. Indeed, assume $yz \in E(G)$. Then $xz \notin E(G)$ or else $\{x,y\}$ would not be a determining pair for $v$. Since $x$ has a determining pair that does not include vertex $v$, then it has at least one other  neighbor $p$ distinct from $\{x,y,z,u,v,w\}$. The fact that $\{x,y\}$ is a determining pair for $v$ implies that $py \notin E(G)$. Further, $pv \notin E(G)$ since $\text{deg}(v) = 3$, and $p$ is adjacent to $z$ since $\{v,p\}$ is not a determining pair for $x$. This implies that every neighbor of $x$ is also a neighbor of $z$. Therefore, $x$ does not have a determining pair, a contradiction. 
\end{proof}

Note a useful observation that is employed at the end of the preceding proof: in any distance critical graph, the family of neighborhoods of vertices is an antichain with respect to the subset relation.

\begin{lemma}
Let $G$ be a distance critical graph on $n$ vertices and let $S$ be the set of vertices which are involved in some determining pair. Then $\vert S\vert > \sqrt{2n}$. 
\end{lemma}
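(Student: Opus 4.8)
The plan is to exploit the uniqueness clause built into the definition of a determining pair to set up an injection from $V(G)$ into the set of $2$-element subsets of $S$, and then to read off the bound from $\binom{|S|}{2}$. This is purely a counting argument; no structural case analysis should be required.

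First I would invoke Proposition \ref{prop:detpair}: since $G$ is distance critical, every vertex $v \in V(G)$ admits a determining pair $\{a,b\}$. By definition both $a$ and $b$ are then involved in a determining pair, so $\{a,b\} \subseteq S$. For each $v$ I would fix one such pair and call it $\phi(v)$, thereby defining a map $\phi : V(G) \to \binom{S}{2}$, where $\binom{S}{2}$ denotes the family of $2$-element subsets of $S$.

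The crucial step is to observe that $\phi$ is injective. Suppose $\phi(v) = \phi(v') = \{a,b\}$. By the definition of a determining pair, $v$ is the \emph{unique} common neighbor of $a$ and $b$, and likewise $v'$ is the unique common neighbor of $a$ and $b$; uniqueness forces $v = v'$. Equivalently, no $2$-element set can serve as a determining pair for two distinct centers, precisely because the center is the unique common neighbor of the pair. Given this injectivity, $n = |V(G)|$ is at most the number of $2$-element subsets of $S$, namely $\binom{|S|}{2} = \tfrac{|S|(|S|-1)}{2}$. Writing $s = |S|$, this yields $2n \le s(s-1) = s^2 - s < s^2$, whence $s > \sqrt{2n}$, as claimed.

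I expect the whole argument to be routine once the injectivity is spotted; the single conceptual point I would emphasize is that the ``unique common neighbor'' requirement in the definition of a determining pair is exactly what prevents one pair from determining two different vertices, and this is what makes the counting tight enough to beat $\sqrt{2n}$. If anything could be delicate, it is only bookkeeping around whether ``involved in a determining pair'' refers to the endpoints $a,b$ (it does, as used in Lemma \ref{prop:deg3}) rather than the center, but that is settled directly from the definitions and does not affect the counting.
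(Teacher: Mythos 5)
Your argument is correct and is essentially identical to the paper's proof: both count determining pairs as $2$-element subsets of $S$, use the uniqueness of the common neighbor to get an injection from $V(G)$ into $\binom{S}{2}$, and conclude $n \leq \binom{|S|}{2} < |S|^2/2$. Your write-up just makes the injectivity step explicit, which the paper states in one sentence.
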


\begin{proof}
Since $G$ is distance critical, each of the $n$ vertices has a determining pair. The vertex corresponding to a determining pair is unique, so, $\binom{| S |}{2} \geq n$. Thus, $| S | > \sqrt{2n}$ as desired. 
\end{proof}

\begin{lemma}\label{lemma:dpstar}
Let $G$ be a distance critical graph and $z \in V(G)$. If $z$ has no determining pair in $G+xy$ for some $xy \not \in E(G)$, then either $\{x,y\}$ is the only determining pair for $z$ or every $z$-determining pair intersects $\{x,y\}$. 
\end{lemma}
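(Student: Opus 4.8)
The plan is to establish the second alternative outright, since it logically implies the stated dichotomy: once we know that \emph{every} determining pair of $z$ in $G$ meets $\{x,y\}$, the ``$\{x,y\}$ is the only determining pair'' alternative is merely the sub-case in which $z$ happens to have a single such pair. So it suffices to prove: if $z$ has no determining pair in $G+xy$, then no determining pair of $z$ in $G$ is disjoint from $\{x,y\}$. Note first that, because $G$ is distance critical, Proposition \ref{prop:detpair} guarantees that $z$ has at least one determining pair in $G$, so the claim is not vacuous.

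The key observation is that passing from $G$ to $G+xy$ alters exactly one adjacency: it makes $x$ and $y$ adjacent and leaves every other pair of vertices as it was. Consequently, for any vertex $c \notin \{x,y\}$ we have $N_{G+xy}(c) = N_G(c)$, since the only edge added, $xy$, is incident to $c$ precisely when $c \in \{x,y\}$, which it is not.

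With this in hand I would argue by contradiction. Suppose $z$ has a determining pair $\{a,b\}$ with $\{a,b\} \cap \{x,y\} = \emptyset$, and show that $\{a,b\}$ survives as a determining pair for $z$ in $G+xy$, contradicting the hypothesis. First, $a$ and $b$ remain nonadjacent in $G+xy$: the sole new edge is $xy$, and $ab \neq xy$ because $\{a,b\}$ is disjoint from $\{x,y\}$. Second, by the observation above the neighborhoods of both $a$ and $b$ are unchanged, so their set of common neighbors in $G+xy$ equals that in $G$, which is exactly $\{z\}$. Hence $z$ is still their unique common neighbor, so $\{a,b\}$ is a determining pair for $z$ in $G+xy$ --- the desired contradiction. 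Therefore every determining pair of $z$ in $G$ intersects $\{x,y\}$, which yields the lemma.

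I do not anticipate a serious obstacle; the argument is essentially careful bookkeeping of which neighborhoods move when the single edge $xy$ is inserted. The only points warranting attention are to use the disjointness hypothesis $\{a,b\}\cap\{x,y\}=\emptyset$ in both places --- to keep $a$ and $b$ nonadjacent and to keep their neighborhoods fixed --- and to recognize that pairs meeting $\{x,y\}$ (for instance a pair $\{x,c\}$ with $yc \in E(G)$, for which the new edge $xy$ makes $y$ a second common neighbor) genuinely \emph{can} be destroyed by adding $xy$. This is precisely what makes the hypothesis satisfiable and explains why the conclusion only forces determining pairs to intersect $\{x,y\}$ rather than to avoid it.
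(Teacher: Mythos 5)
Your proof is correct and follows essentially the same route as the paper's: both hinge on the observation that inserting the single edge $xy$ cannot disturb a determining pair disjoint from $\{x,y\}$, since such a pair stays nonadjacent and the neighborhoods of its two members are unchanged, so any destroyed pair must meet $\{x,y\}$. Your remark that the second alternative subsumes the first (so it suffices to prove every $z$-determining pair meets $\{x,y\}$) is a small but valid tidying of the paper's two-case presentation.
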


\begin{proof}
There are two ways for the addition of the edge $xy$ to disrupt the existence of a $z$-determining pair: (1) $\{x,y\}$ is the only determining pair for $z$, or (2) $xy$ interferes with all determining pairs for $z$, i.e. for every determining pair $\{u,v\}$, $\{u,v\}$ is no longer a determining pair for $z$ in $G+xy$.  

In case (2), the edge $xy$ interferes with all determining pairs of $z$. If the addition of $xy$ destroys the $z$-determining pair $\{u,v\}$, then $u$ and $v$ must have a common neighbor $w$ other than $z$ in $G+xy$. Since this $w$ was not a neighbor of $u$ and $v$ in $G$, either $xy = uw$ or $xy = vw$, and the conclusion follows.   
\end{proof}

\section{Graph Products} \label{sec:products}

\begin{definition}
    The {\em Cartesian product} of two graphs $G$ and $H$, denoted $G \Box H$, is a graph such that $V(G\Box H) = V(G) \times V(H)$. Two vertices $(x,y)$ and $(x',y')$ are adjacent in $G\Box H$ if and only if either
    \begin{itemize}
        \item $x = x'$ and $y \sim y'$ or 
        \item $y = y'$ and $x \sim x'$. 
    \end{itemize}
\end{definition}

\begin{lemma} \label{cartesianprod}
The Cartesian product of a distance critical graph and any other graph is distance critical. 
\end{lemma}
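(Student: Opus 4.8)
The plan is to verify distance-criticality of $G \Box H$ directly through the determining-pair characterization of Proposition \ref{prop:detpair}, by lifting determining pairs from the distance critical factor $G$ into the product. Fix an arbitrary vertex $(x,y) \in V(G \Box H)$. Since $G$ is distance critical, Proposition \ref{prop:detpair} supplies a determining pair $\{a,b\}$ for $x$ in $G$; that is, $a \neq b$, $ab \notin E(G)$, and $x$ is the unique common $G$-neighbor of $a$ and $b$. I would then propose $\{(a,y),(b,y)\}$ as a determining pair for $(x,y)$ in the product, the guiding idea being that pinning the second coordinate at $y$ transfers the relevant $G$-structure verbatim into $G \Box H$.

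First I would confirm the two easy requirements. The vertices $(a,y)$ and $(b,y)$ are distinct because $a \neq b$, and they are nonadjacent because they agree in the second coordinate, so adjacency would require $a \sim b$ in $G$, which is false. Next, $(x,y)$ is a common neighbor of both: since $a \sim x$ and $b \sim x$ in $G$ and the second coordinates agree, the product-adjacency rule gives $(a,y) \sim (x,y)$ and $(b,y) \sim (x,y)$.

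The substantive step, and the one I expect to be the main obstacle, is uniqueness: I must show $(x,y)$ is the \emph{only} common neighbor of $(a,y)$ and $(b,y)$. Let $(p,q)$ be an arbitrary common neighbor. Each of the two adjacencies $(p,q) \sim (a,y)$ and $(p,q) \sim (b,y)$ resolves, by the definition of the Cartesian product, into one of two types: a match in which the first coordinate is pinned ($p = a$, resp.\ $p = b$, with $q \sim_H y$) or a match in which the second coordinate is pinned ($q = y$, with $p \sim_G a$, resp.\ $p \sim_G b$). A four-way case check on which type governs each adjacency collapses at once: any case using a first-coordinate match for one adjacency is incompatible with the other, forcing either $a = b$ or the contradictory pair of conditions $q = y$ and $q \sim_H y$ (a self-loop). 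This leaves only the case in which both adjacencies pin the second coordinate, forcing $q = y$ together with $p \sim_G a$ and $p \sim_G b$; then $p$ is a common $G$-neighbor of $a$ and $b$, so $p = x$ by the uniqueness built into the determining pair $\{a,b\}$, whence $(p,q) = (x,y)$.

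Having produced a determining pair for every vertex, distance-criticality of $G \Box H$ follows from Proposition \ref{prop:detpair}. Since the construction never invokes any edge of $H$, it is insensitive to the structure of the second factor; and in case the product is disconnected, one may instead appeal to Corollary \ref{cor: disconnected} and argue componentwise.
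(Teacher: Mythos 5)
Your proof is correct and takes essentially the same approach as the paper's: both lift a determining pair $\{a,b\}$ for the first coordinate to $\{(a,y),(b,y)\}$ in the product and establish uniqueness of the common neighbor by the same case analysis on the Cartesian adjacency rule. The only difference is cosmetic (your explicit four-way case split and the remark about disconnected products via Corollary \ref{cor: disconnected}).
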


\begin{proof}
Consider $(v,w) \in V(G \Box H)$ with $G$ distance critical and $H$ any other graph. Suppose $v \in V(G)$ has determining pair $\{a,b\}$ in $G$. Then, $(v,w)$ is adjacent to $(a,w)$ as well as to $(b,w)$. Further, $(a,w)$ and $(b,w)$ are nonadjacent because $a$ and $b$ are nonadjacent in $G$. Assume $(x,y) \neq (v,w)$ is also adjacent to $(a,w)$ and $(b,w)$. Since $(x,y)$ is adjacent to $(a,w)$, either $x=a$ or $y=w$. If $x=a$, then $w$ is adjacent to $y$. Since $(x=a,y)$ is adjacent to $(b,w)$, we have $y=w$ and $a$ is adjacent to $b$, a contradiction. Therefore $x \neq a$, and instead, $y=w$. This implies $x$ is adjacent to $a$; similarly, $x$ is adjacent to $b$. Since $\{a,b\}$ is the determining pair of vertex $v$, then $x=v$ and thus $(x,y) = (v,w)$. Therefore, every vertex of $G\Box H$ has a determining pair and the Cartesian product is distance critical as claimed.  
\end{proof}

\begin{definition}
    The {\em tensor product} of two graphs $G$ and $H$, denoted $G \times H$, is a graph such that $V(G\times H) = V(G) \times V(H)$, with two vertices $(x,y)$ and $(x',y')$ adjacent in $G\times H$ if and only if $x \sim x'$ and $y \sim y'$.
\end{definition}

\begin{lemma} \label{tensorprod}
If $G$ and $H$ are distance critical, then $G \times H$ is as well.
\end{lemma}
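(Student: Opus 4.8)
The plan is to verify the determining-pair characterization of Proposition \ref{prop:detpair} directly: it suffices to exhibit, for each vertex $(v,w) \in V(G \times H)$, a determining pair. Indeed, once a vertex has a determining pair $\{p,q\}$, the reasoning from the reverse direction of that proposition shows that $d_{G \times H}(p,q) = 2$ while $d_{(G \times H) - (v,w)}(p,q) > 2$, so distance criticality of $G \times H$ follows; since this reasoning makes no use of connectivity, it applies even if $G \times H$ happens to be disconnected (as tensor products of connected graphs often are).

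The key structural observation is that neighborhoods in a tensor product factor: for any vertex $(s,t)$ we have $N_{G \times H}((s,t)) = N_G(s) \times N_H(t)$, immediately from the adjacency rule. Consequently, the set of common neighbors of two vertices $(a,c)$ and $(b,d)$ is exactly $\bigl(N_G(a) \cap N_G(b)\bigr) \times \bigl(N_H(c) \cap N_H(d)\bigr)$. This reduces the problem of pinning down a \emph{unique} common neighbor in the product to requiring unique common neighbors in each factor simultaneously, which is precisely what distance criticality of the factors supplies.

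First I would use that $G$ and $H$ are distance critical to select, via Proposition \ref{prop:detpair}, a determining pair $\{a,b\}$ for $v$ in $G$ and a determining pair $\{c,d\}$ for $w$ in $H$; by definition these satisfy $N_G(a) \cap N_G(b) = \{v\}$ and $N_H(c) \cap N_H(d) = \{w\}$, with $a \not\sim b$ in $G$ and $c \not\sim d$ in $H$. I then claim that $\{(a,c),(b,d)\}$ is a determining pair for $(v,w)$. The two vertices are distinct because $a \neq b$; they are nonadjacent because $(a,c) \sim (b,d)$ would require $a \sim b$, which is false; and by the factorization above, their common neighborhood equals $\{v\} \times \{w\} = \{(v,w)\}$, so $(v,w)$ is their unique common neighbor. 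As $(v,w)$ was arbitrary, every vertex of $G \times H$ admits a determining pair, and the result follows.

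There is essentially no serious obstacle once the product structure of neighborhoods is in hand; the argument is a direct computation. The only points demanding a little care are (i) confirming nonadjacency of the constructed pair, where it is essential that the determining-pair vertices are nonadjacent in each factor so that the conjunctive tensor adjacency condition fails, and (ii) noting that uniqueness in each factor combines multiplicatively, so that the product introduces no spurious common neighbor. As a sanity check, the alternative cross-pairing $\{(a,d),(b,c)\}$ serves equally well by the same computation, confirming that the construction is robust rather than accidental.
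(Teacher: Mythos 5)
Your proof is correct and takes essentially the same route as the paper: both exhibit $\{(a,c),(b,d)\}$ as a determining pair for $(v,w)$, with your neighborhood factorization $N_{G\times H}((s,t)) = N_G(s)\times N_H(t)$ being a cleaner packaging of the paper's case check that any common neighbor $(x,y)$ forces $x=v$ and $y=w$. Your explicit remark that the determining-pair criterion does not require connectivity (relevant since tensor products can be disconnected) is a small point of extra care that the paper leaves implicit.
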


\begin{proof}
Assume both $G$ and $H$ are distance critical and consider $(v,w) \in V(G \times H)$. Suppose $v \in V(G)$ has determining pair $\{a,b\}$, and suppose $w \in V(H)$ has determining pair $\{c,d\}$. Since $a$ and $b$ are adjacent to $v$ and $c$ and $d$ are adjacent to $w$, this implies that $(v,w)$ is adjacent to both $(a,c)$ and $(b,d)$. Further, $(a,c)$ and $(b,d)$ are nonadjacent because $a$ and $b$ are nonadjacent in $G$. Assume there is another neighbor $(x,y)$ that is adjacent to both $(a,c)$ and $(b,d)$. This implies $x$ is adjacent to $a$ and $b$, and $y$ is adjacent to $c$ and $d$. Since $\{a,b\}$ and $\{c,d\}$ are determining pairs, this implies that $x=v$ and $y=w$. Therefore, $(v,w)$ is the unique common neighbor between vertices $(a,c)$ and $(b,d)$, and the tensor product is distance critical as claimed. 
\end{proof}

\begin{definition}
    The {\em strong product} of two graphs $G$ and $H$, denoted $G \boxtimes H$, is a graph such that $V(G\boxtimes H) = V(G) \times V(H)$, with two distinct vertices $(x,y)$ and $(x',y')$ adjacent in $G\boxtimes H$ if and only if 
    \begin{itemize}
        \item[$\circ$] $x=x'$ or $x \sim x'$ and 
        \item[$\circ$] $y = y'$ or $y \sim y'$. 
    \end{itemize}
\end{definition}

\begin{lemma} \label{strongprod}
If $G$ and $H$ are distance critical, then $G \boxtimes H$ is as well.
\end{lemma}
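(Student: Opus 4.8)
The plan is to mirror the tensor-product argument of Lemma \ref{tensorprod}, again invoking Proposition \ref{prop:detpair}: it suffices to show that every vertex $(v,w)$ of $G \boxtimes H$ admits a determining pair. Since $G$ and $H$ are distance critical, $v$ has a determining pair $\{a,b\}$ in $G$ and $w$ has a determining pair $\{c,d\}$ in $H$. I would then propose $\{(a,c),(b,d)\}$ as a determining pair for $(v,w)$ — exactly the candidate used in the tensor case.

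First I would record the basic structural fact for the strong product that the closed neighborhood factors: $N_{G\boxtimes H}[(p,q)] = N_G[p] \times N_H[q]$, where $N[\cdot]$ denotes the closed neighborhood. This is immediate from the adjacency rule, since $(x,y)$ equals or is adjacent to $(p,q)$ if and only if $x \in N_G[p]$ and $y \in N_H[q]$. I would next dispatch the easy requirements: $(a,c)$ and $(b,d)$ are distinct and nonadjacent because $a \neq b$ and $a \not\sim b$ in $G$, so the first coordinate already violates the ``$=$ or $\sim$'' condition no matter what the second coordinate does; and $(v,w)$ is a common neighbor of $(a,c)$ and $(b,d)$ because $v \sim a$, $v \sim b$, $w \sim c$, and $w \sim d$.

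The crux — and the step I expect to require the most care — is uniqueness of the common neighbor. Any common neighbor $(x,y)$ of $(a,c)$ and $(b,d)$ lies in $\bigl(N_{G\boxtimes H}[(a,c)] \cap N_{G\boxtimes H}[(b,d)]\bigr) \setminus \{(a,c),(b,d)\}$, which by the factorization forces $x \in N_G[a] \cap N_G[b]$ and $y \in N_H[c] \cap N_H[d]$. Here the genuine subtlety of the strong product surfaces: since adjacency permits equality in a coordinate, one must take closed rather than open neighborhoods, and so must rule out the possibility that $x \in \{a,b\}$ or $y \in \{c,d\}$. The resolution is that $a$ and $b$ are distinct and nonadjacent in $G$, so neither lies in the other's closed neighborhood; hence $N_G[a] \cap N_G[b] = N_G(a) \cap N_G(b) = \{v\}$, the last equality holding because $\{a,b\}$ is a determining pair for $v$. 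By the identical reasoning in $H$, $N_H[c] \cap N_H[d] = \{w\}$.

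Combining these, the only candidate common neighbor is $(v,w)$, and it indeed satisfies adjacency as noted, so $(v,w)$ is the unique common neighbor of the nonadjacent pair $(a,c),(b,d)$. Thus every vertex of $G \boxtimes H$ admits a determining pair, and Proposition \ref{prop:detpair} yields that $G \boxtimes H$ is distance critical. The whole argument is structurally parallel to the tensor-product proof; the only new ingredient is the closed-neighborhood bookkeeping in the final paragraph, where the nonadjacency of a determining pair is precisely what collapses the closed intersection back to a single vertex.
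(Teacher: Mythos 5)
Your proof is correct and follows essentially the same route as the paper's: both propose $\{(a,c),(b,d)\}$ as the determining pair for $(v,w)$ and verify nonadjacency, common adjacency to $(v,w)$, and uniqueness of that common neighbor. The only difference is one of packaging: where the paper rules out extra common neighbors by a case analysis on whether $x=a$ or $x\sim a$ in each coordinate, you encode the same information in the closed-neighborhood factorization $N_{G\boxtimes H}[(p,q)] = N_G[p]\times N_H[q]$ together with the observation that distinctness and nonadjacency collapse $N_G[a]\cap N_G[b]$ to $N_G(a)\cap N_G(b)=\{v\}$.
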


\begin{proof}
Assume $G$ and $H$ are distance critical graphs and consider $(v,w) \in V(G \boxtimes H)$. Suppose $v \in V(G)$ has determining pair $\{a,b\}$, and $w \in V(H)$ has determining pair $\{c,d\}$. Since $a$ and $b$ are adjacent to $v$ and $c$ and $d$ are adjacent to $w$, this implies that $(v,w)$ is adjacent to both $(a,c)$ and $(b,d)$. Further, $(a,c)$ and $(b,d)$ are nonadjacent because $a$ and $b$ are nonadjacent in $G$. Assume there is another neighbor $(x,y)$ that is adjacent to both $(a,c)$ and $(b,d)$. Since $(x,y)$ is adjacent to $(a,c)$, either $x=a$ or $x$ is adjacent to $a$. If $x=a$, then $y$ must be adjacent to $c$. Further, we are assuming that $(x=a, y)$ is adjacent to $(b,d)$, so $a=b$ or $a$ is adjacent to $b$, both which contradict $\{a,b\}$ being a determining pair.  Therefore, $x \neq a$, and we can assume that $x$ is adjacent to $a$. The same argument implies that $x$ is adjacent to $b$. Since $\{a,b\}$ is a determining pair, this implies $x=v$. The same argument applies to the second coordinate so that $y=w$ and $(v,w)$ will be the unique common neighbor between $(a,c)$ and $(b,d)$. Therefore, $G\boxtimes H$ is distance critical as claimed.  
\end{proof}

Note that we must assume $G$ and $H$ are both distance critical in the two preceding results; indeed, $C_5 \times C_4$ and $C_5 \boxtimes C_4$ are not distance critical, though $C_5$ is.

\section{Extremal Results} \label{sec:extremal}

The definition of distance critical relies on a local property (requiring that each vertex has a determining pair), so it is unclear what the maximum edge density of a distance critical graph is.  Notice that a graph being distance critical does not imply that it is (edge) maximal with respect to this property. Consider, for example, the cycle on $8$ vertices. $C_8$ is clearly a distance critical graph; however, edges can be added without disrupting its determining pairs. See, for example, Figure \ref{fig:maxC8}.  

		\begin{figure}[htbp]
		\centering
		\begin{tabular}{c}
			\begin{tikzpicture}
		[scale=1.2,xscale=1,yscale=0.9,rotate=180,inner sep=1mm, 
		vertex/.style={circle,thick,draw}, 
		thickedge/.style={line width=2pt}] 
			\node[vertex] (a1) at (2,3) [fill=black] {};
			\node[vertex] (a2) at (3,3) [fill=black] {};
			\node[vertex] (a3) at (4,2) [fill=black] {};
			\node[vertex] (a4) at (4,1) [fill=black] {};
			\node[vertex] (a5) at (3,0) [fill=black] {};
			\node[vertex] (a6) at (2,0) [fill=black] {};
			\node[vertex] (a7) at (1,1) [fill=black] {};
			\node[vertex] (a8) at (1,2) [fill=black] {};
			
			\draw[thickedge] (a1)--(a2)--(a3)--(a4)--(a5)--(a6)--(a7)--(a8)--(a1);
                \draw[thickedge] (a1)--(a5);
                \draw[thickedge] (a2)--(a6);
                \draw[thickedge] (a3)--(a7);
                \draw[thickedge] (a4)--(a8);
			
		\end{tikzpicture}
		\end{tabular}
	\caption{Maximal distance critical graph on $8$ vertices containing $C_8$ as a proper subgraph.}
	\label{fig:maxC8}
	\end{figure}
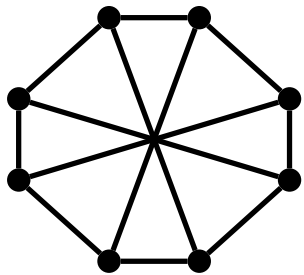

For the following three lemmas, let $n$ represent the number of vertices in the graph. A {\em regular graph} is a graph in which all vertices have the same degree.  First, we note that the fewest number of edges a distance critical graph can have is $n$.

\begin{lemma} \label{lem:minedges}
Every distance critical graph has a minimum of $n$ edges, and $C_n$ achieves this bound for $n \geq 5$. 
\end{lemma}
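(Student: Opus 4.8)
The lemma claims: every distance critical graph has at least $n$ edges, and $C_n$ achieves this for $n \geq 5$.

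Let me think about the proof strategy.The plan is to establish the lower bound via the minimum-degree constraint already proved, then verify the cycle witnesses tightness. By Corollary \ref{cor:degatleast2}, every vertex of a distance critical graph $G$ has degree at least $2$. First I would apply the handshake lemma: summing degrees gives
\begin{equation*}
2|E(G)| = \sum_{v \in V(G)} \deg(v) \geq \sum_{v \in V(G)} 2 = 2n,
\end{equation*}
so $|E(G)| \geq n$ immediately.

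Next I would confirm that the cycle $C_n$ is in fact distance critical for every $n \geq 5$, so that it is a legitimate member of the class whose edge count we are bounding. The cleanest route is Lemma \ref{lem:girth}: the cycle $C_n$ is $2$-regular (hence has minimum degree $2$) and has girth $g = n > 4$ whenever $n \geq 5$, so that lemma directly yields distance criticality. Since $C_n$ has exactly $n$ edges, it attains the bound, completing the verification that the bound is tight.

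I do not anticipate a serious obstacle here, as both halves follow from results established earlier in the excerpt; the only point requiring a moment of care is noting that tightness genuinely requires $n \geq 5$, since for $n < 5$ there are no distance critical graphs at all (as observed following Table \ref{tab:DCgraphs}), so the claim ``$C_n$ achieves this bound'' is only meaningful in the stated range. One could optionally remark that equality $|E(G)| = n$ forces $G$ to be $2$-regular, hence a disjoint union of cycles, and by Corollary \ref{cor: disconnected} together with Lemma \ref{lem:girth} each component must be a cycle of length at least $5$; this characterizes all minimum-edge distance critical graphs, though it is not strictly needed for the lemma as stated.
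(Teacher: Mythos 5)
Your proof is correct and follows essentially the same route as the paper's: the lower bound comes from the minimum-degree-two observation plus the handshake lemma. You additionally verify tightness of $C_n$ via Lemma \ref{lem:girth} (and sketch the characterization of equality), which the paper leaves implicit, but this is elaboration rather than a different argument.
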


\begin{proof}
Every vertex of a distance critical graph has a determining pair. Therefore, the degree of every vertex is at least $2$. Thus, $|E(G)| \geq 2 |V(G)|/2 = n$. 
\end{proof}

\begin{lemma} \label{lem:maxdegree}
The maximum degree of any distance critical graph $G$ on $n \geq 6$ vertices is at most $n-4$. 
\end{lemma}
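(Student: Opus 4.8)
The plan is to take an arbitrary vertex $v$ with $\deg(v) \ge n-3$ and derive a contradiction (for $n \ge 6$); since $v$ is arbitrary, this shows every vertex has degree at most $n-4$. By Lemma~\ref{nodominatingvtx} there is no dominating vertex, so $\deg(v) \ne n-1$ and it suffices to rule out $\deg(v) \in \{n-2,\,n-3\}$. The case $\deg(v)=n-2$ is immediate: if $u$ is the unique non-neighbor of $v$, then every neighbor of $u$ lies in $V(G)\setminus\{u,v\}=N(v)$, so any determining pair for $u$ would consist of two neighbors of $v$, making $v$ a second common neighbor besides $u$. Thus $u$ has no determining pair, contradicting Proposition~\ref{prop:detpair}. (This is the antichain observation following Lemma~\ref{prop:deg3}: no neighborhood can contain another.)

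The substantive case is $\deg(v)=n-3$. Write $u_1,u_2$ for the two non-neighbors of $v$ and set $A_i=N(u_i)\cap N(v)$. First I show $u_1\sim u_2$: otherwise $N(u_1)\subseteq N(v)$ exactly as above, destroying $u_1$'s determining pair. Next comes the key claim, that $N(v)=A_1\cup A_2$, i.e.\ every neighbor $w$ of $v$ is adjacent to $u_1$ or to $u_2$. Indeed $w$ has a determining pair $\{c,d\}$, and these cannot both lie in $N(v)$ (else $v$ is a second common neighbor of $c,d$); so some pair-element lies in $\{v,u_1,u_2\}$. If it is $u_1$ or $u_2$ we are done, since pair-elements are neighbors of $w$; if it is $v$, the pair is $\{v,d\}$ with $v\not\sim d$, which forces $d\in\{u_1,u_2\}$ and again $w\sim d$.

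Finally I collapse the degrees of $u_1$ and $u_2$. Consider $u_1$'s determining pair: it lies in $N(u_1)=\{u_2\}\cup A_1$ and cannot sit inside $A_1\subseteq N(v)$ (again $v$ would be a spurious common neighbor), so it is $\{u_2,b_1\}$ with $b_1\in A_1$; nonadjacency gives $b_1\not\sim u_2$, and uniqueness of $u_1$ as the common neighbor of $u_2,b_1$ forces $b_1$ to be nonadjacent to all of $A_2$. Hence $N(b_1)\subseteq\{v,u_1\}\cup(A_1\setminus A_2)$, so every neighbor of $b_1$ other than $v,u_1$ is adjacent to $u_1$. Now examine $b_1$'s own determining pair: by the key claim it meets $\{v,u_1\}$ (not $u_2$, as $b_1\not\sim u_2$), and any pair other than $\{v,u_1\}$ itself would pair $v$ or $u_1$ with a vertex of $A_1\setminus A_2$ to which it is adjacent — invalid. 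Therefore $b_1$ is the \emph{unique} common neighbor of $v$ and $u_1$, giving $A_1=\{b_1\}$ and $\deg(u_1)=2$; symmetrically $A_2=\{b_2\}$. But then the key claim yields $N(v)=A_1\cup A_2=\{b_1,b_2\}$, so $\deg(v)=2=n-3$, i.e.\ $n=5$, contradicting $n\ge 6$.

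The crux, and the step I expect to be the main obstacle, is this last paragraph: correctly pinning down the determining pairs of $u_1$ and of its private neighbor $b_1$ and verifying that $\{v,u_1\}$ is the only surviving option, which is what forces $\deg(u_1)=2$ and makes the entire neighborhood of $v$ collapse. The earlier pieces (the $n-2$ case, the adjacency $u_1\sim u_2$, and the key claim) are comparatively routine once the determining-pair bookkeeping is in place. It is reassuring that the argument degenerates to exactly $C_5$ when $n=5$ — the cycle $v\,b_1\,u_1\,u_2\,b_2\,v$ realizes $\deg(v)=n-3$ — which is precisely why the hypothesis $n\ge 6$ is needed.
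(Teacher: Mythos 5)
Your proof is correct, and while the outer skeleton (eliminate degrees $n-1$, $n-2$, $n-3$ in turn, with the first two handled exactly as you do) matches the paper, your treatment of the main case $\deg(v)=n-3$ is genuinely different. The paper enumerates the five possible shapes of a determining pair for a vertex $w\in N(v)$ --- $\{v,u_1\}$, $\{v,u_2\}$, $\{u_1,u_2\}$, $\{x,u_1\}$, $\{x,u_2\}$ with $x\in N(v)$ --- and eliminates each type by a somewhat delicate argument about which types the \emph{remaining} vertices of $N(v)$ can then use. You instead localize the analysis at the two non-neighbors: after establishing $u_1\sim u_2$ and your key claim $N(v)=A_1\cup A_2$ (neither of which appears explicitly in the paper, though the adjacency $u_1\sim u_2$ is the paper's elimination of type $\{u_1,u_2\}$), you pin down $u_1$'s determining pair as $\{u_2,b_1\}$, then force $b_1$'s determining pair to be $\{v,u_1\}$, which collapses $A_1$ to a single vertex and ultimately gives $\deg(v)=2$, i.e.\ $n=5$. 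I checked the chain of forced adjacencies and non-adjacencies ($b_1\not\sim u_2$, $b_1$ non-adjacent to all of $A_2$, hence $N(b_1)\subseteq\{v,u_1\}\cup(A_1\setminus A_2)$) and it is sound. Your route buys two things: it avoids the paper's rather terse bookkeeping about which determining-pair types the ``remaining vertices of $S$'' can have, and it explains structurally why the bound fails only at $n=5$ (the configuration degenerates to $C_5$), which the paper only observes separately. The paper's version, in exchange, is a flatter case check that requires no intermediate structural claims.
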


\begin{proof}
Proposition \ref{nodominatingvtx} guarantees that no vertex has degree $n-1$. 

Suppose $v \in V(G)$ had $\deg(v) = n-2$. Then $v$ is adjacent to every vertex except one; label this exception as $u$. Let $S$ be the set of vertices that are adjacent to $v$. Vertex $u$ has a determining pair $\{x,w\}$ since $G$ is distance critical. Since $u$ is not adjacent to $v$, $x, w \in S$. Every vertex in $S$, however, is adjacent to $v$, so $\{x,w\}$ is not the determining pair of $u$, and $v$ cannot have degree $n-2$. 

Assume $\deg(v) = n-3$. Let $S$ be the set of $n-3 \geq 3$ vertices adjacent to $v$, and label the remaining two vertices $u_1$ and $u_2$. Let $w \in S$. The options for a determining pair for $w$ are (a) $\{v,u_1\}$, (b) $\{v,u_2\}$, (c) $\{u_1,u_2\}$, (d) $\{x,u_1\}$, and (e)$\{x,u_2\}$  where $x$ is some other vertex in $S$. Suppose $w$ has option (a) for its determining pair. Then $w$ is adjacent to $u_1$ and $u_1$ is nonadjacent to every other vertex in $S$. Therefore, the remaining vertices of $S$ have determining pairs given by either options (b) or (e). In either case, this implies $u_2$ is adjacent to one other vertex in $S$ and nonadjacent to all other vertices of $S$. Therefore, the remaining vertices of $S$ do not have a determining pair, so option (a) and, similarly, option (b) are impossible. 

Consider $u_1$. Since $u_1$ is not adjacent to $v$, therefore, if $u_1$ and $u_2$ are nonadjacent, $u_1$ must have a determining pair with both vertices in $S$. This contradicts that $u_1$ is the unique neighbor of the vertices in the determining pair, since $v$ is adjacent to every element of $S$. Therefore, $u_1$ is adjacent to $u_2$ and some other vertex in $S$.  But then $\{u_1,u_2\}$ cannot be a determining pair for $w$, and we may rule out option (c) as well.

The vertices of $S$, therefore, only have determining pairs with either option (d) or (e). Assume $w \in S$ with determining pair (d). Then $w$ is adjacent to $u_1$ and $x$ where $x$ is another vertex of $S$ with determining pair of type (e). Therefore, $x$ is adjacent to $u_2$ and $u_1$ is adjacent to $u_2$ so $w$ is not their unique common neighbor. The same argument applies if we start with a vertex in $S$ with determining pair of type (e). Therefore, there exist vertices without a determining pair, a contradiction. 
\end{proof}

The above bound is in fact tight for $n \geq 6$, as shown by the following construction.  Suppose $n \geq 8$ is even.  Let $U = \{u_1,\ldots,u_{n/2-2}\}$ and $U' = \{u'_1,\ldots,u'_{n/2-2}\}$, let $V(G) = U \cup U' \cup \{v,w_1,w_2,w_3\}$, and define
\begin{align*}
    E(G) &= \{u_ju'_j : u_j \in U, u'_j \in U' \} \cup \{w_1 u : u \in U \} \cup \{w_2 u' : u' \in U' \} \\
    & \qquad \cup \{v w : w \in U \cup U' \} \cup \{w_1w_3,w_2w_3\}
\end{align*}
It is straightforward to check that $G$ is distance critical and $\deg(v) = n-4$.    See Figure \ref{fig:maxdegree}.  

		\begin{figure}[htbp]
		\centering
		\begin{tabular}{c}
			\begin{tikzpicture}
		[scale=1.2,xscale=1,yscale=0.9,rotate=180,inner sep=.7mm, 
		vertex/.style={circle,draw}, 
		thickedge/.style={line width=2pt}] 
			\node[vertex] (a1) at (0,0) [fill=black] {};
   \node[vertex] (a2) at (-1,1) [fill=black] {};
   \node[vertex] (a3) at (-2,1) [fill=black] {};
   \node[vertex] (a4) at (-3,1) [fill=black] {};
   \node[vertex] (a5) at (-4,1) [fill=black] {};
   \node[vertex] (a6) at (-1,-1) [fill=black] {};
   \node[vertex] (a7) at (-2,-1) [fill=black] {};
   \node[vertex] (a8) at (-3,-1) [fill=black] {};
   \node[vertex] (a9) at (-4,-1) [fill=black] {};
   \node[vertex] (w1) at (-5,-.4) [fill=black] {};
   \node[vertex] (w2) at (-5,.4) [fill=black] {};
   \node[vertex] (w3) at (-6,-0) [fill=black] {};

			\draw (a1)--(a2);
   \draw (a1)--(a3);
   \draw (a1)--(a4);
   \draw (a1)--(a5);
   \draw (a1)--(a6);
   \draw (a1)--(a7);
   \draw (a1)--(a8);
   \draw (a1)--(a9);
   \draw (w1)--(a2);
   \draw (w1)--(a3);
   \draw (w1)--(a4);
   \draw (w1)--(a5);
   \draw (w2)--(a6);
   \draw (w2)--(a7);
   \draw (w2)--(a8);
   \draw (w2)--(a9);
   \draw (w2)--(w3);
   \draw (w1)--(w3);
   \draw (a2)--(a6);
   \draw (a3)--(a7);
   \draw (a4)--(a8);
   \draw (a5)--(a9);

		\end{tikzpicture}
		\end{tabular}
	\caption{Maximum degree construction for $n=12$.}
	\label{fig:maxdegree}
	\end{figure}
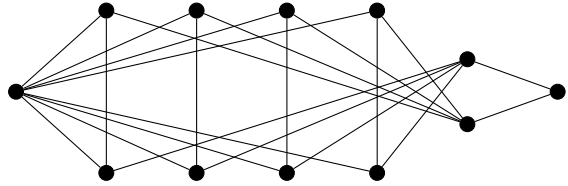
 
 If $n$ is odd, then the same construction works with an additional vertex $w_4$ and new edges $vw_4$ and $w_3w_4$.  If $n=6$, we can take $G = C_6$, and for $n=7$, consider adding to $C_6$ a single vertex adjacent to an antipodal pair.  (If $n=5$, the graph $C_5$ is the only distance critical graph, and its maximum degree is $n-3$.)

The $k^{th}$ power $G^k$ of an undirected graph $G$ is another graph on the same set of vertices, in which two vertices are adjacent when their distance in $G$ is at most $k$. 

\begin{lemma} \label{lem:regular}
If $n \geq 5$, the maximum $d$ for which there is a $d$-regular distance critical graph is $\lfloor \frac{n-1}{4}\rfloor + \lfloor \frac{n}{4}\rfloor$.
\end{lemma}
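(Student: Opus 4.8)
The plan is to prove matching upper and lower bounds on $d$. Throughout write $k=\lfloor n/4\rfloor$ and $d^{*}=\lfloor \frac{n-1}{4}\rfloor+\lfloor \frac{n}{4}\rfloor$. The engine of the whole argument is the elementary arithmetic observation that $d^{*}$ is exactly the largest integer $d$ satisfying simultaneously $d\le (n-1)/2$ and $nd$ even; this is verified by running through the four residues of $n$ modulo $4$ (the parity constraint only bites when $n$ is odd, i.e.\ for $n\equiv 1,3$, where it forces a drop by $1$). So it suffices to prove (i) every $d$-regular distance critical graph obeys $d\le (n-1)/2$, and (ii) a $d^{*}$-regular distance critical graph exists for every $n\ge 5$.

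For (i), fix a vertex $v$ in a $d$-regular distance critical $G$ and take a determining pair $\{a,b\}$ for $v$ from Proposition \ref{prop:detpair}. Since $a\not\sim b$, neither $a$ nor $b$ lies in $N(a)\cup N(b)$, and since $v$ is the unique common neighbour we have $|N(a)\cup N(b)|=2d-1$; hence $\{a,b\}\cup N(a)\cup N(b)$ consists of $2d+1$ distinct vertices and $n\ge 2d+1$, that is $d\le (n-1)/2$. The handshake lemma supplies the second constraint $nd\equiv 0\pmod 2$, and together with the arithmetic observation this gives $d\le d^{*}$.

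For (ii) when $n\not\equiv 0\pmod 4$ we have $d^{*}=2k$, and I would take $G=C_n^{\,k}$, which is $2k$-regular. To see it is distance critical, use vertex-transitivity and exhibit for the vertex $0$ the determining pair $\{-k,k\}$: these are nonadjacent exactly when $n>3k$, and their common neighbourhood is $[-2k,0]\cap[0,2k]=\{0\}$ precisely when $n\ge 4k+1$. At $k=\lfloor n/4\rfloor$ both inequalities hold, and $n\ge 4k+1$ is equivalent to $n\not\equiv 0\pmod 4$ — exactly the regime where $d^{*}$ is even — so this case closes cleanly.

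The remaining case $n=4m$ (so $d^{*}=2m-1$ is odd) is the crux and is where I expect the real work. Here the largest distance critical cycle power is only $C_{4m}^{\,m-1}$, which is $(2m-2)$-regular, so one must add a single edge at every vertex, i.e.\ a perfect matching $M$ whose edges avoid the ``reach'' differences $\pm1,\dots,\pm(m-1)$, and $M$ must be chosen so that \emph{every} vertex still has a determining pair in $C_{4m}^{\,m-1}+M$. The antipodal matching $i\mapsto i+2m$ works for $m=2$ (giving the graph $C_8(1,4)$ of Figure \ref{fig:maxC8}) but fails for $m\ge 3$, since the antipodal edges manufacture a second common neighbour for each candidate pair; indeed one checks directly that $C_{12}(1,2,6)$ has \emph{no} determining pair at any vertex. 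My plan is instead to build $M$ from differences near $2(m-1)$ — mimicking the extremal base pair $\{i-(m-1),\,i+(m-1)\}$ — together with a few corrective edges to cover the vertices left uncovered by the incomplete ``difference-$2(m-1)$'' structure. For $m=3$ the explicit matching with edge set $\{0,3\},\{5,10\},\{4,8\},\{1,9\},\{2,6\},\{7,11\}$ added to $C_{12}^{2}$ can be verified vertex-by-vertex to yield a $5$-regular distance critical graph, confirming $d^{*}=5$ is attained at $n=12$. The main obstacle is packaging such a matching uniformly in $m$ and proving the determining-pair property survives at all $4m$ vertices at once; the smallest cases ($n=8,12$, together with $n=5,6,7$ for the even family) would be dispatched by direct inspection.
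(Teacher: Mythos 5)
Your upper bound is correct and is essentially the paper's: counting around a determining pair gives $d \le (n-1)/2$, and the handshake parity $nd \equiv 0 \pmod 2$ removes exactly the case $n \equiv 3 \pmod 4$. (Your parenthetical that parity also forces a drop at $n \equiv 1 \pmod 4$ is a harmless slip, since $(n-1)/2$ is already even there.) Your construction for $n \not\equiv 0 \pmod 4$, namely $C_n^{\lfloor n/4\rfloor}$ with determining pair $\{i-k, i+k\}$, is also the paper's construction for those residues.

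The genuine gap is the case $4 \mid n$, which you yourself flag as unfinished: you verify $n=8$ and give a correct $5$-regular example at $n=12$ (I checked that every vertex of $C_{12}^{2}$ plus your matching admits a determining pair), but you provide no construction, and no proof, for general $n=4m$, so the lower bound is not established for an infinite family of $n$. That said, your diagnosis of this case is substantively valuable and exposes an error in the paper itself. The paper's construction here is $C_{4m}^{m-1}$ plus the antipodal matching $i \mapsto i+2m$, i.e.\ the circulant $C_{4m}(1,\dots,m-1,2m)$, with the claim that $\{i-(m-1),\, i+(m-1)\}$ remains a determining pair for $i$. As you observe, this fails for $m \ge 3$: the vertex $i-(m-1)+2m = i+m+1$ is matched to $i-(m-1)$ and lies at cyclic distance $2 \le m-1$ from $i+(m-1)$, hence is a second common neighbour (symmetrically, so is $i-(m+1)$); and for $n=12$ one checks directly that $C_{12}(1,2,6)$ has no determining pair at any vertex, so the paper's graph is not even distance critical. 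Consequently neither your write-up nor the paper's proof closes the $4 \mid n$ case beyond small $n$; to finish the lemma one must exhibit, and verify uniformly in $m$, a $(2m-1)$-regular distance critical graph on $4m$ vertices for every $m \ge 2$ --- your ``difference near $2(m-1)$'' matching is a plausible route, but as written it is a plan rather than a proof.
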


\begin{proof}
First, we prove the upper bound. Every vertex in a distance critical graph, $G$, has a determining pair. Let $\{x,y\}$ be the determining pair for some vertex $v$. The neighborhoods of $x$ and $y$ only intersect at $v$. Therefore, the remaining $n-3$ vertices are nonadjacent to at least one of $x$ or $y$. Further, since $x$ and $y$ are both adjacent to $v$, we have that $\deg(x)+\deg(y) \leq n-3+2 = n-1$. In a regular graph, all vertices have the same degree; therefore, $\deg(x) = \deg(y)$ so that $\deg(x) \leq \frac{n-1}{2}$ as desired.   Note that, if $n \not \equiv 3 \pmod{4}$, then $\lfloor (n-1)/2 \rfloor = \lfloor \frac{n-1}{4}\rfloor + \lfloor \frac{n}{4}\rfloor$.  On the other hand, if $n \equiv 3 \pmod{4}$, then both $n$ and $(n-1)/2$ are odd, so there is no $(n-1)/2$-regular graph on $n$ vertices, and we may conclude that $\deg(x) \leq \frac{n-1}{2} - 1 = \lfloor \frac{n-1}{4}\rfloor + \lfloor \frac{n}{4}\rfloor$.

Now, we construct a regular distance critical graph for each $n \geq 5$ to show that this bound is tight. Let $G$ be the cycle on $n$ vertices where vertex $i$ is adjacent to $i \pm 1\pmod n$ for $0 \leq i \leq n-1$. 
\begin{itemize}
    \item Assume $n\equiv 1\pmod 4$, and consider $G' := G^{(n-1)/4}$. For every vertex $i$ in $G'$, $i$ is adjacent to $i \pm j \pmod{n}$ for $1 \leq j \leq \frac{n-1}{4}$. Therefore, we have a $\frac{n-1}{2}$-regular graph. Now we show that this graph is indeed distance critical. Choose some $i \in V(G)$. This vertex has the determining pair $\{i+\frac{n-1}{4}, i-\frac{n-1}{4}\}$. Indeed, $i+\frac{n-1}{4}$ is not adjacent to $i-\frac{n-1}{4}$ and by definition, $i+\frac{n-1}{4}$ is adjacent to $i+\frac{n-1}{4}\pm j$ while $i-\frac{n-1}{4}$ is adjacent to $i-\frac{n-1}{4}\pm j$ for $1 \leq j \leq \frac{n-1}{4}$. Therefore, $i$ is the only common neighbor between $i+\frac{n-1}{4}$ and $i-\frac{n-1}{4}$, and the graph is distance critical as claimed.
    \item Assume $n \equiv 2\pmod 4$, and consider $G^{ (n-2)/4 }$. This is a $\frac{n-2}{2}$-regular graph where vertex $i$ has a determining pair $\{i+\frac{n-2}{4}, i-\frac{n-2}{4}\}$. Further, this is the maximum possible degree since the first paragraph of the proof restricted the degree to at most $\frac{n-1}{2}$. 
    \item Assume $n \equiv 3\pmod 4$, and consider $G^{(n-3)/4}$. This is a $\frac{n-3}{2}$-regular graph where vertex $i$ has the determining pair $\{i+\frac{n-3}{4}, i-\frac{n-3}{4}\}$. Further, this is the maximum possible degree. Notice that $\frac{n-1}{2}$ is odd; therefore, an even number of vertices is required to form a $\frac{n-1}{2}$-regular graph. However, $n \equiv 3\pmod 4$ implies that $n$ is odd.  
    \item Assume $4$ divides $n$, and consider $G^{ (n-4)/4}$. This is a $\frac{n-4}{2}$-regular graph where vertex $i$ has the determining pair $\{i+\frac{n-4}{4}, i-\frac{n-4}{4}\}$. We can; however, make this a $\frac{n-2}{2}$-regular graph by adding an edge between vertices $i$ and $i+\frac{n}{2}\pmod n$. This does not change the fact that vertex $i$ has determining pair $\{i+\frac{n-4}{4}, i-\frac{n-4}{4}\}$ because those two vertices are still nonadjacent and their only common neighbor is still only vertex $i$. Therefore, we have constructed a $\frac{n-2}{2}$-regular distance critical graph. This is the maximum possible degree since the degree is at most $\frac{n-1}{2}$. 
\end{itemize}   
\end{proof}

Next, we examine the properties of edge-maximal distance critical graphs, i.e., distance critical $G$ so that, for all $e \not \in E(G)$, $G+e$ is not distance critical.

\begin{cor}
    Every edge-maximal distance critical graph is connected. 
\end{cor}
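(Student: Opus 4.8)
The plan is to argue by contradiction. Suppose $G$ is an edge-maximal distance critical graph that is disconnected. By Corollary \ref{cor: disconnected} each connected component of $G$ is itself distance critical, and in particular nonempty, so I may choose vertices $x$ and $y$ lying in two distinct components $C_1 \ni x$ and $C_2 \ni y$. Since $x$ and $y$ lie in different components, $xy \notin E(G)$, and moreover $x$ and $y$ have no common neighbor in $G$; in particular $\{x,y\}$ is not a determining pair for any vertex. The goal is to show that $G + xy$ is still distance critical, which contradicts the edge-maximality of $G$ and forces $G$ to be connected.

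To establish that $G + xy$ is distance critical, I would show that every determining pair already present in $G$ survives the addition of $xy$; since $G$ is distance critical, every vertex has such a pair, so this suffices. Let $\{a,b\}$ be a determining pair for a vertex $z$ in $G$. Both $a$ and $b$ are neighbors of $z$, so $a$, $b$, and $z$ all lie in a single component of $G$; in particular $\{a,b\} \neq \{x,y\}$, so $a$ and $b$ remain nonadjacent in $G + xy$. It therefore remains only to rule out the possibility that $a$ and $b$ acquire a new common neighbor $w \neq z$.

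The crux of the argument is that a cross-component edge cannot manufacture such a $w$. If $w$ were a new common neighbor of $a$ and $b$ in $G+xy$, then, since $xy$ is the only edge added, $w$ must be joined to $a$ or to $b$ by the edge $xy$ itself; say $\{w,a\} = \{x,y\}$. Then $w$ and $a$ lie in different components of $G$. On the other hand, $w$ is adjacent to $b$ by an edge of $G+xy$ other than $xy$, hence $wb \in E(G)$, so $w$ lies in the same component as $b$, which is the same component as $a$ --- a contradiction. (Alternatively, one can invoke Lemma \ref{lemma:dpstar}: since $\{x,y\}$ is not a determining pair for any vertex, any vertex losing all its determining pairs would require every such pair to meet $\{x,y\}$, and the same component bookkeeping rules this out.) Thus every determining pair of $G$ persists in $G+xy$, so $G+xy$ is distance critical, contradicting maximality.

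I expect the only delicate point to be the case analysis confirming that the new edge $xy$ cannot create a fresh common neighbor for any determining pair. The bookkeeping that both endpoints of a determining pair live in a single component, while $xy$ straddles two, is exactly what drives the contradiction, and it is worth stating carefully to make sure no subcase (for instance, $a$ or $b$ coinciding with $x$ or $y$) is overlooked.
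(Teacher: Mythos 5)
Your proof is correct, but it does more work than the paper does. The paper's entire proof is a one-line citation of Lemma \ref{lem:diameter}: if $x$ and $y$ lie in different components of a distance critical $G$, then $d_G(x,y) = \infty > 3$, so that lemma immediately gives that $G+xy$ is distance critical, contradicting edge-maximality. What you have written is, in effect, a self-contained re-proof of the special case $d_G(x,y) = \infty$ of that lemma: like the paper's proof of Lemma \ref{lem:diameter}, you check that the added edge can neither join the two vertices of an existing determining pair nor create a new common neighbor for them, except that you phrase the obstruction in terms of components (the pair $\{a,b\}$ and its determining vertex $z$ all live in one component, while $xy$ straddles two) rather than in terms of short paths ($xvy$ or $xzvy$) forcing $d_G(x,y)\le 3$. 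Your case analysis is sound --- in particular you correctly note that $wa$ and $wb$ cannot both be the new edge since $a\neq b$, and that $w$ cannot have been a common neighbor already in $G$ --- and the component bookkeeping handles the coincidences $a=x$, etc., cleanly. The only inefficiency is not recognizing that the hypothesis of Lemma \ref{lem:diameter} is satisfied verbatim (disconnection gives infinite, hence $>3$, distance), which would have reduced your argument to one sentence; the appeal to Corollary \ref{cor: disconnected} is also unnecessary, since you only need two nonempty components to choose $x$ and $y$ from.
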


\begin{proof}
    This follows immediately from Lemma \ref{lem:diameter}.
\end{proof}

See Table \ref{tab:maxDCgraphs} for the number of edge-maximal distance critical graphs with $n$ small. 

\begin{table}[htbp]
    \centering
    \begin{tabular}{ |c||c|c|c|c|c|c|c|c|c|c|c| } 
			\hline
 $n$ & 1 & 2 & 3 & 4 & 5 & 6 & 7 & 8 & 9 & 10 & 11 \\ 
 \hline
 $\textrm{A}371674(n)$ & 0 &0 & 0 & 0 &1 &1 &2 &4 &14 &82 &557 \\
			\hline
		\end{tabular}
	\caption{The number $\textrm{A}371674(n)$ of edge-maximal distance critical graphs on $n$ vertices (\cite[A371674]{oeis}).}
		
		\label{tab:maxDCgraphs}
	\end{table}

\begin{cor}\label{cor:intersectS}
Let $S$ be the set of vertices which are involved in some determining pair in an edge-maximal distance critical graph, $G$. Then every non-edge of $G$ intersects the set $S$. 
\end{cor}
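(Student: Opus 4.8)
The plan is to contrapose: I would fix an arbitrary non-edge $\{x,y\}$ of $G$ (so $x \ne y$ and $xy \notin E(G)$) and show directly that it must meet $S$. The key leverage is edge-maximality together with Lemma \ref{lemma:dpstar}, which I expect to do essentially all of the work.

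First I would exploit edge-maximality. Since $G$ is distance critical but $G + xy$ is not (as $xy$ is a non-edge and $G$ is edge-maximal), there must exist a vertex $z \in V(G)$ that possesses a determining pair in $G$ but possesses none in $G + xy$. The existence of a determining pair for $z$ in $G$ is guaranteed because $G$ is distance critical (Proposition \ref{prop:detpair}); this point is worth flagging, since Lemma \ref{lemma:dpstar} is only informative when $z$ actually has a determining pair to disrupt. Next I would apply Lemma \ref{lemma:dpstar} to this $z$, which yields exactly two possibilities. In the first, $\{x,y\}$ is itself the only determining pair for $z$; then $x$ and $y$ both belong to $S$ by definition, so certainly $\{x,y\} \cap S \ne \emptyset$. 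In the second, every $z$-determining pair intersects $\{x,y\}$; since $z$ has at least one determining pair in $G$, I would pick one, say $\{u,v\}$, and observe that one of $u,v$ lies in $\{x,y\}$. But $u$ and $v$ are, by construction, vertices involved in a determining pair, hence lie in $S$; the one equal to $x$ or $y$ then witnesses $\{x,y\} \cap S \ne \emptyset$. In either case the non-edge meets $S$.

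Since both branches of Lemma \ref{lemma:dpstar} land immediately inside $S$, there is no genuine combinatorial obstacle; the statement is essentially a repackaging of that lemma under the edge-maximality hypothesis. The only subtlety I would be careful to state explicitly is the logical reading of ``every $z$-determining pair intersects $\{x,y\}$'': it is tempting to conclude merely that $x$ or $y$ is adjacent to some vertex, whereas the correct deduction is that $x$ or $y$ occurs as a coordinate of an \emph{actual} determining pair, which is precisely the condition defining membership in $S$. Making this interpretation explicit, rather than leaving it implicit, is the one place where the write-up could otherwise become imprecise.
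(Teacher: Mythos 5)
Your proof is correct and follows essentially the same route as the paper: use edge-maximality to produce a vertex $z$ with no determining pair in $G+xy$, then apply Lemma \ref{lemma:dpstar} and observe that in either branch some vertex of $\{x,y\}$ occurs in an actual determining pair of $z$, hence lies in $S$. Your explicit handling of the two branches and the remark that $z$ must already possess a determining pair in $G$ merely spell out steps the paper leaves implicit.
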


\begin{proof}
Let $xy \not \in E(G)$. Since $G$ is edge-maximal distance critical, there exists a $z \in V(G)$ such that $z$ does not have a determining pair in $G+xy$. Lemma \ref{lemma:dpstar} ensures that every $z$-determining pair contains either $x$ or $y$; therefore, $xy$ intersects $S$ as desired.
\end{proof}

\begin{cor}
If $G$ is an edge-maximal distance critical graph and $S$ is the set of vertices involved in some determining pair, then the set of vertices $T=V(G)-S$ induces a clique. 
\end{cor}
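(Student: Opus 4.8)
The plan is to show that any two vertices $t_1, t_2 \in T = V(G) \setminus S$ must be adjacent, by deriving a contradiction from the assumption that $t_1 t_2 \notin E(G)$. The key observation is that $T$ consists exactly of those vertices that never appear in any determining pair, so by Corollary \ref{cor:intersectS} every non-edge of $G$ must intersect $S$. If I suppose for contradiction that $t_1 t_2$ is a non-edge, then since both endpoints lie in $T$, neither $t_1$ nor $t_2$ is in $S$, so the non-edge $t_1 t_2$ fails to intersect $S$. This directly contradicts Corollary \ref{cor:intersectS}.

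More explicitly, the argument runs as follows. First I would let $t_1, t_2 \in T$ be arbitrary distinct vertices and assume toward a contradiction that $t_1 t_2 \notin E(G)$. Since $T = V(G) - S$, we have $t_1 \notin S$ and $t_2 \notin S$, so $\{t_1, t_2\} \cap S = \emptyset$. But Corollary \ref{cor:intersectS} asserts that in an edge-maximal distance critical graph every non-edge intersects $S$, and $t_1 t_2$ is such a non-edge. This is the contradiction. Hence every pair of distinct vertices in $T$ is adjacent, which is precisely the statement that $T$ induces a clique.

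I expect this proof to be essentially a one-line deduction from Corollary \ref{cor:intersectS}, so there is no substantial obstacle; the work has already been done in establishing that corollary via Lemma \ref{lemma:dpstar}. The only thing to be careful about is the degenerate cases where $|T| \leq 1$, in which the clique condition holds vacuously and no argument is needed. Thus the main content is simply recognizing that the definition of $T$ as the complement of $S$ forces any internal non-edge to avoid $S$ entirely, which the preceding corollary forbids.
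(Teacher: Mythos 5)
Your proof is correct and follows exactly the paper's argument: assume a non-edge between two vertices of $T$, observe it is disjoint from $S$, and contradict Corollary \ref{cor:intersectS}. Nothing further is needed.
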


\begin{proof}
Assume not, i.e., instead there are two vertices $x$ and $y$ of $T$ which are non-adjacent in $G$. We assumed $G$ is edge-maximal, and $xy \notin E(G)$ and does not intersect $S$, which contradicts Corollary \ref{cor:intersectS}. 
\end{proof}

In order to understand better the density of distance critical graphs, we investigate the maximum size of their cliques. Since a determining pair consists of nonadjacent vertices, the determining pair for vertices within the clique have two options: $(1)$ the determining pair consists of one vertex within the clique and one vertex outside the clique, and $(2)$ the determining pair consists of vertices outside the clique. This leads to the following results.  

\begin{definition}
Let $\Gamma_m$ be a graph which has a vertex set divided into $3$ sets: set $A$ has $m(m-1)/2$ vertices labeled as $a_{ij}$ for $0 \leq i < j < m$, set $B$ has $m$ vertices labeled as $b_j$ for $0 \leq j < m$, and set $C$ has $2m$ vertices labeled as $c_j$ for $0 \leq j < 2m$ (understood modulo $2m$). The edge set $E(\Gamma_m) = K \cup X \cup Y \cup Z$, where $K$ consists of all pairs of vertices from $A$; $X$ consists of all pairs $a_{ij} b_i$ and $a_{ij} b_j$, where $0 \leq i \neq j < m$; $Y$ consists of all edges of the form $b_j c_j$ and $b_j c_{j+m}$, where $0 \leq j < m$; and $Z$ contains every edge of the form $c_j c_{j+1}$ for $0 \leq j < 2m$.   
\end{definition} 

The graph $\Gamma_5$ is shown in Figure \ref{fig:G5}. The white vertices lie in set $A$, the gray vertices are in set $B$, and the black vertices are in set $C$.  

		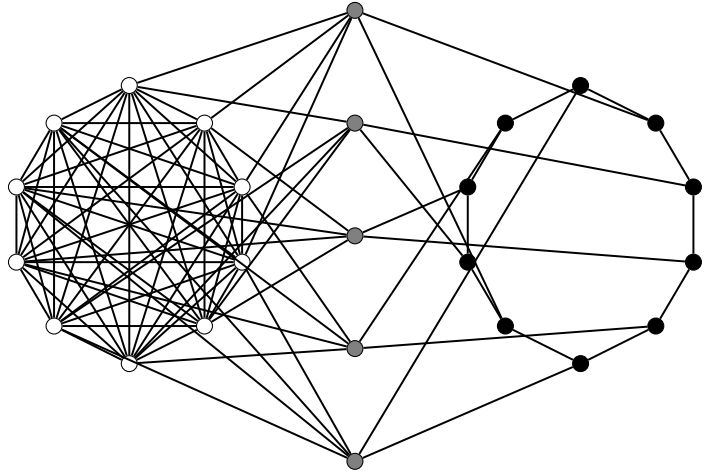
\begin{figure}[htbp]
		\centering
		\begin{tabular}{c}
			\begin{tikzpicture}
			[scale=0.5,inner sep=0.75mm, 
			vertex/.style={circle,draw}, 
			thickedge/.style={line width=0.75pt}] 
			
    \node[vertex] (0) at (0,2) [fill=white] {};
    \node[vertex] (1) at (2,1) [fill=white] {};
    \node[vertex] (2) at (3,-.7) [fill=white] {};
    \node[vertex] (3) at (3,-2.7)[fill=white] {};
    \node[vertex] (4) at (2,-4.4) [fill=white] {};
    \node[vertex] (5) at (0,-5.4) [fill=white] {};
    \node[vertex] (6) at (-2,-4.4) [fill=white] {};
    \node[vertex] (7) at (-3,-2.7)[fill=white] {};
    \node[vertex] (8) at (-3,-.7) [fill=white] {};
    \node[vertex] (9) at (-2,1) [fill=white] {};
    \draw[thickedge] (0)--(1);
    \draw[thickedge] (0)--(2);
    \draw[thickedge] (0)--(3);
    \draw[thickedge] (0)--(4);
    \draw[thickedge] (0)--(5);
    \draw[thickedge] (0)--(6);
    \draw[thickedge] (0)--(7);
    \draw[thickedge] (0)--(8);
    \draw[thickedge] (0)--(9);
    \draw[thickedge] (1)--(2);
    \draw[thickedge] (1)--(3);
    \draw[thickedge] (1)--(4);
    \draw[thickedge] (1)--(5);
    \draw[thickedge] (1)--(6);
    \draw[thickedge] (1)--(7);
    \draw[thickedge] (1)--(8);
    \draw[thickedge] (1)--(9);
    \draw[thickedge] (2)--(3);
    \draw[thickedge] (2)--(4);
    \draw[thickedge] (2)--(5);
    \draw[thickedge] (2)--(6);
    \draw[thickedge] (2)--(7);
    \draw[thickedge] (2)--(8);
    \draw[thickedge] (2)--(9);
    \draw[thickedge] (3)--(4);
    \draw[thickedge] (3)--(5);
    \draw[thickedge] (3)--(6);
    \draw[thickedge] (3)--(7);
    \draw[thickedge] (3)--(8);
    \draw[thickedge] (3)--(9);
    \draw[thickedge] (4)--(5);
    \draw[thickedge] (4)--(6);
    \draw[thickedge] (4)--(7);
    \draw[thickedge] (4)--(8);
    \draw[thickedge] (4)--(9);
    \draw[thickedge] (5)--(6);
    \draw[thickedge] (5)--(7);
    \draw[thickedge] (5)--(8);
    \draw[thickedge] (5)--(9);
    \draw[thickedge] (6)--(7);
    \draw[thickedge] (6)--(8);
    \draw[thickedge] (6)--(9);
    \draw[thickedge] (7)--(8);
    \draw[thickedge] (7)--(9);
    \draw[thickedge] (8)--(9);
    \node[vertex] (10) at (6,4) [fill=gray] {};
    \node[vertex] (11) at (6,1) [fill=gray] {};
    \node[vertex] (12) at (6,-2) [fill=gray] {};
    \node[vertex] (13) at (6,-5) [fill=gray] {};
    \node[vertex] (14) at (6,-8) [fill=gray] {};
    \draw[thickedge] (0)--(10);
    \draw[thickedge] (0)--(11);
    \draw[thickedge] (1)--(10);
    \draw[thickedge] (1)--(12);
    \draw[thickedge] (2)--(10);
    \draw[thickedge] (2)--(13);
    \draw[thickedge] (3)--(10);
    \draw[thickedge] (3)--(14);
    \draw[thickedge] (4)--(11);
    \draw[thickedge] (4)--(12);
    \draw[thickedge] (5)--(11);
    \draw[thickedge] (5)--(13);
    \draw[thickedge] (6)--(11);
    \draw[thickedge] (6)--(14);
    \draw[thickedge] (7)--(12);
    \draw[thickedge] (7)--(13);
    \draw[thickedge] (8)--(12);
    \draw[thickedge] (8)--(14);
    \draw[thickedge] (9)--(13);
    \draw[thickedge] (9)--(14);
    \node[vertex] (15) at (12,2) [fill=black] {};
    \node[vertex] (16) at (14,1)[fill=black] {};
    \node[vertex] (17) at (15,-.7) [fill=black] {};
    \node[vertex] (18) at (15,-2.7)[fill=black] {};
    \node[vertex] (19) at (14,-4.4) [fill=black] {};
    \node[vertex] (20) at (12,-5.4) [fill=black] {};
    \node[vertex] (21) at (10,-4.4) [fill=black] {};
    \node[vertex] (22) at (9,-2.7)[fill=black] {};
    \node[vertex] (23) at (9,-.7) [fill=black] {};
    \node[vertex] (24) at (10,1) [fill=black] {};
    \draw[thickedge] (10)--(16);
    \draw[thickedge] (10)--(21);
    \draw[thickedge] (11)--(17);
    \draw[thickedge] (11)--(22);
    \draw[thickedge] (12)--(18);
    \draw[thickedge] (12)--(23);
    \draw[thickedge] (13)--(19);
    \draw[thickedge] (13)--(24);
    \draw[thickedge] (14)--(15);
    \draw[thickedge] (14)--(20);
    \draw[thickedge] (15)--(16);
    \draw[thickedge] (16)--(17);
    \draw[thickedge] (17)--(18);
    \draw[thickedge] (18)--(19);
    \draw[thickedge] (19)--(20);
    \draw[thickedge] (20)--(21);
    \draw[thickedge] (21)--(22);
    \draw[thickedge] (22)--(23);
    \draw[thickedge] (23)--(24);
    \draw[thickedge] (15)--(24);
		\end{tikzpicture}
		\end{tabular}
	\caption{$\Gamma_5$, with $A$ vertices white, $B$ vertices gray, and $C$ vertices black.}
	\label{fig:G5}
	\end{figure}

\begin{theorem}\label{thm:clique_density}
Among distance-critical graphs $G$ on $n$ vertices, the maximum possible clique number of $G$ is $ n-\Theta\left(\sqrt{n}\right)$.
\end{theorem}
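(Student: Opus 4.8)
The plan is to prove matching upper and lower bounds, both of the form $n - \Theta(\sqrt{n})$, and combine them.

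For the \emph{upper bound} I would fix a maximum clique $Q$ in a distance-critical graph $G$, write $R = V(G)\setminus Q$ and $r = |R|$, and aim to show $|Q| \le \binom{r+1}{2}$; this forces $r = \Omega(\sqrt{n})$ and hence $\omega(G) = n - r \le n - \Omega(\sqrt{n})$. The starting observation is that every $v \in Q$ has a determining pair (Proposition \ref{prop:detpair} applied to its component, via Corollary \ref{cor: disconnected}), and since the two members of a determining pair are nonadjacent while any two vertices of $Q$ are adjacent, at least one member of $v$'s pair lies in $R$. I would then split $Q$ into two classes according to a chosen determining pair: those whose pair has \emph{both} endpoints in $R$, and those with exactly \emph{one} endpoint in $R$. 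For the first class, the map sending $v$ to its pair is injective into $\binom{R}{2}$, because a determining pair has a \emph{unique} common neighbor; this bounds the class by $\binom{r}{2}$.

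The crux is the second class. Suppose $v \in Q$ has determining pair $\{a,b\}$ with $a \in Q$ and $b \in R$. I claim $b$ is adjacent to exactly one vertex of $Q$, namely $v$: any other $u \in Q\setminus\{v,a\}$ is adjacent to $a$ (as $Q$ is a clique), so if it were also adjacent to $b$ it would be a second common neighbor of $a$ and $b$, contradicting uniqueness of $v$; and $a \not\sim b$ since $\{a,b\}$ is a determining pair. Consequently the $R$-endpoint $b$ determines $v$, so this class injects into $R$ and has size at most $r$. Combining, $|Q| \le r + \binom{r}{2} = \binom{r+1}{2}$; since $|Q| = n - r$ this yields $n \le r + \binom{r+1}{2} = \tfrac{1}{2}(r^2 + 3r)$, whence $r \ge (1-o(1))\sqrt{2n}$, completing the upper bound.

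For the \emph{lower bound} I would use the graph $\Gamma_m$ defined above. It has $n = \binom{m}{2} + 3m$ vertices (with $|A| = \binom{m}{2}$, $|B| = m$, $|C| = 2m$), and I would verify that it is distance critical by exhibiting a determining pair for each vertex: $\{b_i,b_j\}$ for $a_{ij} \in A$, the antipodal pair $\{c_j,c_{j+m}\}$ for $b_j \in B$, and the two cycle-neighbors $\{c_{\ell-1},c_{\ell+1}\}$ for $c_\ell \in C$ (all valid for $m \ge 3$). A short case check shows the only clique of size exceeding $m$ is $A$ itself, so $\omega(\Gamma_m) = \binom{m}{2} = n - 3m = n - \Theta(\sqrt{n})$. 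To realize all values of $n$, not just those of the form $\binom{m}{2} + 3m$, I would take the largest $m$ with $\binom{m}{2}+3m \le n$, decrease $m$ by at most one if needed, and append a single disjoint cycle of length $\ge 5$ to absorb the remaining $O(\sqrt{n})$ vertices; this keeps the graph distance critical by Corollary \ref{cor: disconnected} and does not increase the clique number, since consecutive values $\binom{m}{2}+3m$ are spaced $\Theta(\sqrt{n})$ apart.

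The main obstacle will be the bookkeeping in the lower bound: verifying that $\Gamma_m$ is distance critical requires checking that each candidate pair really has $v$ as its \emph{unique} common neighbor — in particular that the two $B$-vertices of an $A$-pair share no spurious neighbor in $C$, that antipodal $C$-vertices share only their common $B$-vertex, and that the cycle on $C$ is triangle-free — and confirming that $A$ is the maximum clique. The upper bound, by contrast, is short once the ``unique $Q$-neighbor'' observation for one-endpoint pairs is isolated; that single injectivity claim is the heart of the argument.
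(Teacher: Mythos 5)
Your proposal is correct and follows essentially the same route as the paper: the upper bound uses the identical decomposition of the clique's vertices by whether their determining pair has one or two endpoints outside the clique, with the same injectivity observations (you merely combine the two counts additively as $|Q|\le r+\binom{r}{2}$ where the paper takes a max, yielding the same $\Omega(\sqrt{n})$ conclusion), and the lower bound uses the same graph $\Gamma_m$ with the same determining pairs. Your extra remark about padding with a disjoint cycle to hit every value of $n$ is a small refinement the paper leaves implicit.
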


\begin{proof}
Consider the graph $\Gamma_m$ for $m \geq 3$. First, we argue that $\Gamma_m$ is indeed distance critical by noting the determining pairs for each vertex. Consider the vertex $a_{ij}$. This vertex has the determining pair $\{b_i, b_j\}$ because $b_i$ is not adjacent to $b_j$ and $b_i$ is adjacent to $a_{ij}$, $c_i,$ and  $c_{i+m}$ while $b_j$ is adjacent to $a_{ij}$, $c_j$, and $c_{j+m}$. Therefore, $a_{ij}$ is the only common neighbor between $b_i$ and $b_j$. Consider vertex $b_j$. This vertex has determining pair $\{c_j, c_{j+m}\}$ because $c_j$ is not adjacent to $c_{j+m}$ and $c_j$ is adjacent to $b_j$, $c_{j-1}$, and $c_{j+1}$ while $c_{j+m}$ is adjacent to $b_j$, $c_{j+m-1}$, and $c_{j+m+1}$. Therefore, $b_j$ is the only common neighbor between $c_j$ and $c_{j+m}$. Lastly, consider vertex $c_j$. This vertex has determining pair $\{c_{j-1}, c_{j+1}\}$ because $c_{j-1}$ is not adjacent to $c_{j+1}$, and $c_{j-1}$ is adjacent to $c_j$, $c_{j-2}$, and $b_{j-1}$, while $c_{j+1}$ is adjacent to $c_j$, $c_{j+2}$, and $b_{j+1}$. Therefore, $c_j$ is the only common neighbor between $c_{j-1}$ and $c_{j+1}$. 

Next, we establish the clique number. By construction of $\Gamma_m$, a clique of size $m(m-1)/2$ is induced by the vertex set $\{a_{ij}\}_{0\leq i < j < m}$ with a remaining $3m$ vertices of the form $b_j$ or $c_j$. Therefore, $n = \vert V(\Gamma_m)\vert = \binom{m}{2}+3m = m^2/2 + O(m)$ so that $m = \sqrt{2n} (1+o(1))$. From here, we see that $\max_G  \omega(G) \geq n - (3+o(1)) \sqrt{2n} = n-O\left( \sqrt{n}\right)$.

Now we establish a matching upper bound. Consider a distance critical graph $G$ on $n$ vertices. Every vertex $v$ in a max clique $K$ of size $m$ must have a determining pair, say, $\{x_v,y_v\}$. Let $S = \bigcup_{v \in K} \{x_v,y_v\}$.  Note that, for each $v \in K$, $|\{x_v,y_v\} \setminus K| = 1$ or $2$, because if it were zero, then $\{x_v,y_v\} \subset K$ so that $x_v y_v \in E(G)$, contradicting that $\{x_v,y_v\}$ is a determining pair.  Let $D$ be the subset of $V(K)$ with $|\{x_v,y_v\} \setminus K| = 1$ and $E = K \setminus D$.  If $v \in D$, wlog we assume $x_v \in K$ and $y_v \not \in K$.  Note that the $y_v$ are distinct across all $v \in D$, since, if $y_v = y_w$ for some $w \in D$, then $x_v$ and $y_v$ have common neighbors $v$ and $w$, contradicting that they form a determining pair for $v$.  Thus, 
$$
|V(G - K)| \geq |\{y_v : v \in D\}| \geq |D|.
$$
On the other hand, if $v \in E$, then the pair $\{x_v,y_v\}$ is entirely contained in $V(G-K)$.  Since none of these pairs are repeated (or else they could not be determining pairs), the vertex set $\bigcup_{v \in E} \{x_v,y_v\}$ admits at least $|E|$ vertex pairs and therefore at least $\sqrt{2|E|}$ vertices, all of which lie outside $K$.  Therefore, 
$$
|V(G-K)| \geq \max\{|D|,\sqrt{2|E|}\} = \max\{|D|,\sqrt{2(m-|D|)}\} \geq \sqrt{2m+1}-1,
$$
since $0 \leq |D| \leq m$.  Then $G$ contains at least $\sqrt{2m+1}-1$ vertices in addition to the clique $K$, and so $n \geq m + \sqrt{2m} + o(\sqrt{m})$ which implies $m \leq n - \sqrt{(2+o(1)) n}$, and we may conclude that $\omega(G) \leq n - \Omega(\sqrt{n})$.
\end{proof}

The next result builds off of the preceding construction to show that distance criticality is a highly non-local property: every graph is the induced subgraph of some slightly larger distance critical graph.

\begin{theorem} \label{thm:embed_any_G}
Every graph is an induced subgraph of some distance critical graph. 
\end{theorem}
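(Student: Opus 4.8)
The plan is to recycle the graph $\Gamma_m$ from the proof of Theorem \ref{thm:clique_density}, exploiting a feature of that construction that the proof of the clique bound did not need: the determining pairs exhibited for the vertices of $\Gamma_m$ never refer to any edge lying \emph{inside} the set $A$. Concretely, given an arbitrary graph $G$ on $N$ vertices, I would first fix an integer $m \geq 3$ with $\binom{m}{2} \geq N$, and then build a graph $H$ from $\Gamma_m$ by \emph{deleting} the clique $K$ on $A$ and, in its place, putting a copy of $G$ on $N$ of the vertices $a_{ij}$ (leaving the remaining $\binom{m}{2}-N$ vertices of $A$ with no internal edges), while keeping the edge sets $X$, $Y$, and $Z$ exactly as in the definition of $\Gamma_m$. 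Since $A$-vertices are joined to $A\cup B$ only, taking the subgraph of $H$ induced on the chosen $N$ vertices of $A$ sees only the $A$–$A$ edges, so this induced subgraph is exactly $G$; it then remains to show $H$ is distance critical.

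The heart of the argument is the observation that, in $H$, the three families of determining pairs from the proof of Theorem \ref{thm:clique_density} survive verbatim: $\{b_i,b_j\}$ for each $a_{ij}$, $\{c_j,c_{j+m}\}$ for each $b_j$, and $\{c_{j-1},c_{j+1}\}$ for each $c_j$. The key structural facts are that $A$ is adjacent only to $A\cup B$ and never to $C$, that $B$ is an independent set, and that each $c_k$ is adjacent to exactly one vertex of $B$. From these it follows, for instance, that any common neighbor of $b_i$ and $b_j$ must be an $A$-vertex adjacent to both, and by the definition of $X$ the only such vertex is $a_{ij}$, regardless of which edges are present within $A$. The same insensitivity holds for the $b_j$- and $c_j$-pairs, whose common-neighbor analyses take place entirely within $B\cup C$. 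Invoking Proposition \ref{prop:detpair}, this establishes that $H$ is distance critical.

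I expect the main (and essentially only) obstacle to be the careful bookkeeping verifying that replacing the clique on $A$ by $G$ genuinely leaves every determining pair intact — in particular confirming that placing arbitrary edges among the $a_{ij}$ cannot manufacture a second common neighbor for any of the pairs above, and noting the mild hypothesis $m \geq 3$ needed so that the two $B$-neighbors $b_{(j-1)\bmod m}$ and $b_{(j+1)\bmod m}$ of $c_{j-1}$ and $c_{j+1}$ are distinct (so that $c_j$ is their unique common neighbor). There is no deeper difficulty: the construction adds only $\Theta(\sqrt{N})$ extra vertices beyond $G$, which is precisely what makes distance criticality a non-local property, as advertised.
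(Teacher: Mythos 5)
Your proposal is correct and takes essentially the same approach as the paper: the published proof likewise replaces the clique on $A$ in $\Gamma_m$ by a copy of $G$ (keeping only $n$ vertices of $A$, labeled by the lexicographically least pairs, whereas you pad $A$ out to all $\binom{m}{2}$ vertices) and observes that the determining pairs $\{b_i,b_j\}$, $\{c_j,c_{j+m}\}$, and $\{c_{j-1},c_{j+1}\}$ are insensitive to the edges inside $A$. No changes needed.
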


\begin{proof}
Let $G$ be any graph on $n$ vertices. Clearly, we may assume $n \geq 3$.  Modifying the construction of $\Gamma_m$ to obtain a $\Gamma_m'$, let $A$ be $V(G)$, and include $E(G)$ in $E(\Gamma_m')$. An additional $m$ vertices are needed to make up the set $B$ where $m^2 - m - 2n \geq 0$; therefore, add $m = \lceil \frac{1 + \sqrt{1 + 8n}}{2} \rceil$ additional vertices to account for the $B$ set. Label the vertices of $G$ as $a_{ij}$ for the $n$ lexicographically least pairs $\{i,j\}$ with $0 \leq i < j < m$.  Label the added vertices $b_j$ for $0 \leq j < m$. Add edges $a_{ij}b_i$ and $a_{ij}b_j$ for every element $a_{ij}$ of set $A$.  Similarly, add an additional $2m = \lceil 1+\sqrt{1+8n} \rceil$ vertices to make up set $C$ and label these vertices as $c_j$ for $0 \leq j < 2m$ (understood modulo $2m$). Add edges $b_jc_j$ and $b_j c_{j+m}$ for $0 \leq j < m$. Lastly, add edges $c_j c_{j+1}$ for $0 \leq j < 2m$. The argument that every vertex of $\Gamma_m'$ admits a determining pair is nearly identical to the argument for $\Gamma_m$ in the proof of Theorem \ref{thm:clique_density}.  
\end{proof}

We conclude by bounding the edge density of distance critical graphs.

\begin{theorem} \label{thm:edge-density}
Among distance critical graphs $G$ on $n$ vertices, the maximum edge density of $G$ is between $1-O(1/\sqrt{n})$ and $1-\Omega(1/n)$. 
\end{theorem}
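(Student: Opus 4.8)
The plan is to establish the two bounds separately, both as fairly direct consequences of results already in hand. Recall that the edge density of $G$ is $|E(G)|/\binom{n}{2}$, so the lower bound amounts to exhibiting a distance critical graph on $n$ vertices with few non-edges, while the upper bound amounts to showing that \emph{every} distance critical graph on $n$ vertices has many non-edges.

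For the lower bound, I would invoke the construction $\Gamma_m$ from the proof of Theorem \ref{thm:clique_density}, which is distance critical and contains a clique $K$ on $\binom{m}{2}$ vertices, where $n = \binom{m}{2} + 3m$ and hence $\omega(\Gamma_m) = n - \Theta(\sqrt{n})$. Writing $\omega = n - c\sqrt{n}$ with $c = \Theta(1)$, the edges internal to $K$ alone number $\binom{\omega}{2} = \frac{n^2}{2}\bigl(1 - \frac{2c}{\sqrt{n}} + O(1/n)\bigr)$, so the edge density of $\Gamma_m$ is at least $\binom{\omega}{2}/\binom{n}{2} = 1 - O(1/\sqrt{n})$. (For values of $n$ not of the exact form $\binom{m}{2}+3m$ one pads the construction, which only perturbs the asymptotics negligibly.)

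For the upper bound, the key observation is that the determining-pair assignment is injective. Since $G$ is distance critical, Proposition \ref{prop:detpair} furnishes for each $v \in V(G)$ a determining pair $\{a_v, b_v\}$, which by definition consists of two nonadjacent vertices, so $a_v b_v$ is a non-edge of $G$. If two vertices $v, w$ shared the same determining pair $\{a, b\}$, then both would be the unique common neighbor of $a$ and $b$, forcing $v = w$. Hence $v \mapsto \{a_v, b_v\}$ is an injection from $V(G)$ into the set of non-edges, yielding at least $n$ distinct non-edges. Therefore $|E(G)| \leq \binom{n}{2} - n$, and the edge density is at most $1 - n/\binom{n}{2} = 1 - \frac{2}{n-1} = 1 - \Omega(1/n)$.

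Neither direction presents a substantial obstacle, since both reduce to counting non-edges. The only care needed is in the asymptotic bookkeeping for the lower bound, ensuring that the $\Theta(\sqrt{n})$ deficit in the clique size translates into the claimed $O(1/\sqrt{n})$ loss in density; the upper bound is exact up to the implied constant in the $\Omega$.
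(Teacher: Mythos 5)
Your proposal is correct and follows essentially the same route as the paper: the lower bound comes from the edge count of the $\Gamma_m$ construction of Theorem \ref{thm:clique_density}, and the upper bound from the fact that the $n$ determining pairs supply at least $n$ distinct non-edges. Your write-up is actually more explicit than the paper's terse proof, which asserts the ``at least $N$ non-edges'' claim without spelling out the injectivity of $v \mapsto \{a_v,b_v\}$ that you correctly justify.
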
   

\begin{proof}
    $\Gamma_m$ has $N^2 - \Theta(N^{3/2})$ edges on $N:=m(m+5)/2$ vertices. Further, every vertex of a distance critical graph has a determining pair, ensuring at least $N$ non-edges. Therefore, the maximal edge density is between $1-O(1/\sqrt{n})$ and $1-\Omega(1/n)$. 
\end{proof}

\section{Open Questions} \label{sec:questions}

To the best of our knowledge, the present work introduces distance critical graphs. Therefore, open questions abound. We end by presenting a few particularly interesting ones.  

First, since there is a gap between the upper and lower bounds in Theorem \ref{thm:edge-density}, we ask the following. 

\begin{question}
    What is the maximal edge density of distance critical graphs?
\end{question}

Next, although Theorem \ref{thm:embed_any_G} can be used to obtain a lower bound on the number of distance critical graphs, we were unable to determine if this bound is tight.

\begin{question}
    What fraction of graphs are distance critical?
\end{question}

Finally, the property of distance criticality is easily generalized to finite metric spaces, but the present work did not investigate which of the above results translate easily to that setting, or which ones fail to generalize.  So, we offer the following open-ended question as well.

\begin{question}
    Let $(X,\rho)$ be a finite metric space with the property that the induced metric space on $X-v$ for any $v \in X$ disagrees with $\rho$ on at least one pair of points.  What can be said about such spaces? 
\end{question}

\bibliographystyle{plain}
\bibliography{ref}
\end{document}